\def\ArxivFormat{1}
  \newtheorem{theorem}{Theorem}[section]
  \newtheorem{corollary}{Corollary}[theorem]
  \newtheorem{lemma}[theorem]{Lemma}
  \newtheorem{definition}[theorem]{Definition}
  \newtheorem{example}[theorem]{Example}
  \newtheorem{xca}[theorem]{Exercise}
  \theoremstyle{remark}
  \newtheorem{remark}[theorem]{Remark}
  \numberwithin{equation}{section}
  \newtheorem{theorem}{Theorem}[section]
  \newtheorem{lemma}[theorem]{Lemma}
  \newtheorem{definition}[theorem]{Definition}
  \theoremstyle{remark}
  \newtheorem{remark}[theorem]{Remark}
  \numberwithin{equation}{section}
\newcolumntype{L}{>{\raggedright\arraybackslash}X}
\setlist%
{%
 topsep=0pt,%
 labelsep=6pt,
 noitemsep,%
 leftmargin=*
}
\newcolumntype{Y}{>{\centering\arraybackslash}X}
\providecommand*{\Xmath}[1]{\ensuremath{#1}\xspace}
\providecommand*{\XmathThreePar}[3]{%
  \ifthenelse{\equal{#3}{'}}{%
    \Xmath{#1_{#2}'}
  }{%
    \Xmath{#1_{#2}^{#3}} %\smash{#3}}}
  }
}
\providecommand*{\XmathFourPar}[4]{\XmathThreePar{#1{#2}}{#3}{#4}}
\newcommand*{\Xop}[1]{\ensuremath{\mathrm{#1}\,}}
\newcommand*{\NSet}[3]{\XmathFourPar{\mathbb}{#1}{#2}{#3}}
\newcommandtwoopt*{\R}[2][][]{\NSet{R}{#1}{#2}}
\renewcommand*{\Re}{\Xop{Re}}
\renewcommand*{\Im}{\Xop{Im}}
\newcommand*{\Abs}[2][]{\XBrace[#1]{\lvert}{\rvert}{#2}}
\newcommand*{\XBrace}[4][]{\Xmath{\mathopen#1#2#4\mathclose#1#3}}
\newcommand*{\cframe}[1]{\Xmath{\mathcal\!{#1}}}
\newcommand*{\Quaternion}[3]{\XmathFourPar{}{#1}{#2}{#3}}
\newcommand*{\RealPart}[3]{\XmathFourPar\Re{#1}{#2}{#3}}
\newcommand*{\ImaginaryPart}[1]{\Im{#1}}
\newcommandtwoopt*{\qa}[2][][]{\Quaternion{a}{#1}{#2}}
\newcommandtwoopt*{\qb}[2][][]{\Quaternion{b}{#1}{#2}}
\newcommandtwoopt*{\qc}[2][][]{\Quaternion{c}{#1}{#2}}
\newcommandtwoopt*{\qd}[2][][]{\Quaternion{d}{#1}{#2}}
\newcommandtwoopt*{\qi}[2][][]{\Quaternion{i}{#1}{#2}}
\newcommandtwoopt*{\qj}[2][][]{\Quaternion{j}{#1}{#2}}
\newcommandtwoopt*{\qk}[2][][]{\Quaternion{k}{#1}{#2}}
\newcommandtwoopt*{\qn}[2][][]{\Quaternion{n}{#1}{#2}}
\newcommandtwoopt*{\qp}[2][][]{\Quaternion{p}{#1}{#2}}
\newcommandtwoopt*{\qs}[2][][]{\Quaternion{s}{#1}{#2}}
\newcommandtwoopt*{\qu}[2][][]{\Quaternion{u}{#1}{#2}}
\newcommandtwoopt*{\qq}[2][][]{\Quaternion{q}{#1}{#2}}
\newcommandtwoopt*{\qw}[2][][]{\Quaternion{w}{#1}{#2}}
\newcommandtwoopt*{\qx}[2][][]{\Quaternion{x}{#1}{#2}}
\newcommandtwoopt*{\qy}[2][][]{\Quaternion{y}{#1}{#2}}
\newcommandtwoopt*{\qnu}[2][][]{\Quaternion{\nu}{#1}{#2}}
\newcommandtwoopt*{\rea}[2][][]{\RealPart{a}{#1}{#2}}
\newcommandtwoopt*{\reb}[2][][]{\RealPart{b}{#1}{#2}}
\newcommandtwoopt*{\rec}[1][][]{\RealPart{c}{#1}}
\newcommandtwoopt*{\req}[1][][]{\RealPart{q}{#1}}
\newcommandtwoopt*{\rex}[1][][]{\RealPart{x}{#1}}
\newcommandtwoopt*{\ima}[2][][]{\Quaternion{\ImaginaryPart{a}}{#1}{#2}}
\newcommandtwoopt*{\imb}[2][][]{\Quaternion{\ImaginaryPart{b}}{#1}{#2}}
\newcommandtwoopt*{\imc}[2][][]{\Quaternion{\ImaginaryPart{c}}{#1}{#2}}
\newcommandtwoopt*{\imp}[2][][]{\Quaternion{\ImaginaryPart{p}}{#1}{#2}}
\newcommandtwoopt*{\imq}[2][][]{\Quaternion{\ImaginaryPart{q}}{#1}{#2}}
\newcommandtwoopt*{\imx}[2][][]{\Quaternion{\ImaginaryPart{x}}{#1}{#2}}
\newcommandtwoopt*{\imy}[2][][]{\Quaternion{\ImaginaryPart{y}}{#1}{#2}}
  \newcommand*{\NegSpace}{\!\!}
  \newcommand*{\NegSpace}{}
  \title[Closed-form Solution of Wahba's Problem]{Closed-form Solution of Wahba's Problem for Pairwise Similar
  Quaternions}
  \author{Hristina Radak, Christian Scheunert, and Frank H. P. Fitzek}
  \address{H.~Radak and F.~Fitzek are with the Deutsche Telekom
  Chair of Communication Networks, Dresden University of Technology, 01062
  Dresden, Germany, Email: hristina.radak@tu-dresden.de}
  \address{C.~Scheunert is with the Chair of Information Theory and Machine
  Learning, Dresden University of Technology, 01062 Dresden, Germany}
  \address{F.~Fitzek is also with the Centre for Tactile
  Internet with Human-in-the-Loop (CeTI), Dresden University of Technology,
  01062 Dresden, Germany}
  \title{Closed-form Solution of Wahba's Problem for Pairwise Similar
  Quaternions}
  \author{Hristina Radak}
  \affil{Deutsche Telekom Chair of Communication Networks, Dresden University of
  Technology, 01062 Dresden, Germany}
  \author{Christian Scheunert}
  \affil{Chair of Information Theory and Machine Learning, Dresden University of
  Technology, 01062 Dresden, Germany}
  \author{Frank H. P. Fitzek}
  \affil{Deutsche Telekom Chair of Communication Networks and Centre for Tactile
  Internet with Human-in-the-Loop (CeTI), Dresden University of Technology,
  01062 Dresden, Germany}
\begin{document}

\maketitle

%*******************************************************************************
% ABSTRACT
%*******************************************************************************

\if\ArxivFormat1
\begin{abstract}
Wahba's problem is fundamental to spacecraft attitude estimation, seeking the optimal rotation that minimizes the weighted misalignment between sets of vector observations. Traditional solvers, including Davenport's $q$-method, QUEST, and ESOQ, reformulate the problem as an eigenvalue task for a $4 \times 4$ symmetric matrix, a process that obscures the underlying algebraic structure of the solution. This paper presents a novel, entirely quaternion-based closed-form solution for the pairwise similar quaternions. By establishing a direct connection to the homogeneous singular Sylvester equation: \text{(i)} we derive the necessary and sufficient condition for the existence of a quaternion that achieves zero Wahba's cost; \text{(ii)} we provide a closed-form analytic expression for the corresponding solution set; and \text{(iii)} we propose the computationally efficient and numerically stable Minimal Analytic Rotation Algorithm (MARA). Computational complexity analysis demonstrates that MARA achieves a $35.11\%$ reduction in total floating-point operations (FLOPs) compared to the state-of-the-art ESOQ2 algorithm. Numerical validation via $10^6$ Monte Carlo trials confirms that MARA achieves higher accuracy than established optimal solvers under stochastic noise, offering a computationally more efficient and analytically transparent alternative for high-frequency attitude determination systems.

\end{abstract}
\fi

\if\ArxivFormat1
  \medskip
  \noindent\textbf{Keywords:} Quaternions; Wahba’s problem; Pairwise similarity; Closed-form solution;

  %\medskip
  %\noindent\textbf{Mathematical sub classification numbers (MSC) 2020:} 15A24, 15B33
\else %AIAA
%  \section*{Nomenclature}
%
%  \noindent
%
%  {\renewcommand\arraystretch{1.0}
%  \noindent\begin{longtable*}{@{}l @{\quad=\quad} l@{}}
%	$\qa, \qb, \qc, \qp, \qu, \qq, \qx$	& Quaternions \\
%	$\req$ & Quaternion real part \\
%    $\imq$ & Quaternion imaginary part \\
%
%%    $\qq[][*]$ & Quaternion conjugate \\
%%    $\Abs{\qq}$ & Quaternion norm \\
%%    $\qq[][-1]$ & Quaternion inverse \\
%%	$\sqrt{\qq}$ & Quaternion square root \\
%	
%%	$(\,\cdot\,)$ & Vector scalar product\\
%%	$(\times)$ & Vector cross product \\
%    $\sim$ & Quaternion similarity operator  \\
%
%	$f$ & Wahba's cost function \\
% 	$n$ & Number of observations \\
%
%	$\lambda, \mu$ & Free scalar real parameters \\
% 	$N$ & Number of simulated trials \\
% 	$M$ & Number of noisy realisations \\
% 	$\sigma$ & noise variance \\
%
%	$\Delta \qq$ & Error quaternion \\
%	$\theta$ & Angle of rotation \\
%	$\Delta\theta$ & Absolute error (deg)  \\
%	$\delta\theta$ & Relative error  \\
%	 	
%	\multicolumn{2}{@{}l}{Subscripts}\\
%	$\ell$ & Index of vector observation pairs  \\
%	$w, x, y, z$ & Quaternion real coefficients \\
%	$T$ & Ground truth 
%  \end{longtable*}}
%
%  \addtocounter{table}{-1}
\fi

%*******************************************************************************
% SECTION 1
%*******************************************************************************

\section{Introduction}

% Motivation
Determining the rotation between two coordinate frames is a fundamental problem in applied mathematics, with applications in spacecraft attitude estimation, robotics, and navigation. These fields require robust, accurate, and computationally efficient methods for recovering an object's rotation relative to another coordinate frame. Solutions often rely on vector observations expressed in both coordinate frames. The problem of finding the rotation between two sets of vector observations is formally known as Wahba's problem~\cite{wahba1965_least}. Wahba's problem has been addressed using various mathematical formulations, including rotation matrices and quaternions. Among these, quaternion-based approaches are particularly effective due to their compact representation, absence of singularities, and computational efficiency.

Let $\cframe{A}$, $\cframe{B}$ be two coordinate frames and $\{\qa[1], \qa[2], \dots, \qa[n]\}$, $\{\qb[1], \qb[2], \dots, \qb[n]\}$ denote the sets of nonzero pure quaternions, where $\qa[\ell]$, $\qb[\ell]$ with $\ell \in \{1,2,\dots,n\}$, $n \geq 2,$ represent vectors observed in $\cframe{A}$, $\cframe{B}$, respectively. Wahba's problem seeks a quaternion $\qq$ that minimizes the cost function
    \begin{align}
   	 	W_n(\qq) = \sum_{\ell=1}^{n} w_\ell \Abs{\qq[][-1]\qa[\ell]\qq - \qb[\ell]}^2,
    		\label{eq:WAHBAS_COST_FUNCTION_GENERAL}
    \end{align}
where the coefficients $w_\ell$ with $\ell \in \{1,2,\dots,n\}$ are positive real numbers. Without loss of generality, we may assume $w_\ell = 1$ for all $\ell$, as positive weights can be absorbed into $\qa[\ell]$ and $\qb[\ell]$.
Several well-established algorithms have been developed to solve Wahba's problem for the general case $n \geq 2$. Davenport's $\qq$-method~\cite{davenport1968_vector} reformulates the problem in~\eqref{eq:WAHBAS_COST_FUNCTION_GENERAL} as an eigenvalue problem involving a symmetric $4\times4$ matrix and computes a largest eigenvalue that yields the optimal quaternion. The QUEST algorithms~\cite{shuster1978_algorithm, shuster1978_approximat, shuster+oh1981_three-axis} further refine this approach by reducing the problem in~\eqref{eq:WAHBAS_COST_FUNCTION_GENERAL} to finding the largest root of a fourth-order polynomial via iterative methods. Later, works of Horn~\cite{horn1987_closed-for} and Markley~\cite{markley1988_attitude} proposed an alternative formulation employing the Singular Value Decomposition (SVD) of the attitude profile matrix to construct the optimal quaternion. Mortari~\cite{mortari1997_esoq} introduced the ESOQ method, which provides an analytic solution to the fourth-order polynomial, thereby eliminating iterative procedures. Markley and Mortari~\cite{markley+mortari2000_quaternion} subsequently improved ESOQ through the ESOQ2 algorithm, which reduces the number of arithmetic operations and enhances numerical stability relative to both ESOQ and QUEST. Wu~et~al.~\cite{WuZhou2018_FLAE} introduced the Fast Linear Attitude
Estimator~(FLAE), which employs an eigenvalue-based solution to obtain the optimal quaternion and to achieve higher computational speed compared to prior methods. However, no explicit floating-point operation count was reported to substantiate this claim.
The special case of $n = 2$ observations, however, is often treated separately. The TRIAD algorithm~\cite{shuster+oh1981_three-axis} offers a deterministic quaternion estimate for $n=2$, however, it is inherently suboptimal. Markley~\cite{markley2002_fast} explicitly addressed this suboptimality by introducing an algorithm that achieves the optimal solution while matching the computational speed of TRIAD.
In summary, all these algorithms for solving the original Wahba’s problem apply different methods of matrix calculus, which often obscure the underlying structure of solutions. To the best of our knowledge, no method for solving Wahba’s problem directly in the quaternion algebra has been published in the literature to date.

% Our contribution
In this paper, we consider the important special case of pairwise similar quaternions $\{\qa[1], \qa[2], \dots, \qa[n]\}$, $\{\qb[1], \qb[2], \dots, \qb[n]\}$. In this case, there exists a quaternion $\qq$ such that Wahba’s cost function satisfies the condition $W_n(\qq)=0$. We show that only two arbitrary pairs of observed vectors $\{\qa[\ell],\qa[m]\}$, $\{\qb[\ell],\qb[m]\}$ with $\ell, m \in\{1,2,\dots,n\}$, $\ell\neq m$ are required to determine $\qq$.
We establish a direct algebraic relation between Wahba's cost function in~\eqref{eq:WAHBAS_COST_FUNCTION_GENERAL} and the homogeneous singular Sylvester equation in quaternions $\qa\qq - \qq\qb = 0$ studied in our companion work \cite{radak+scheunert++2025_on}. This relation leads to the three main contributions of this work: \text{(i)} we derive the necessary and sufficient condition for the existence of a quaternion that achieves zero Wahba's cost; \text{(ii)} we provide a closed-form analytic expression for the corresponding solution set; and \text{(iii)} we propose the computationally efficient and numerically stable Minimal Analytic Rotation Algorithm (MARA). Our approach solves the problem entirely within the quaternion algebra, avoiding conventional matrix calculus such as iterative eigenvalue computation or matrix inversion, thereby improving computational efficiency and analytical clarity of the solution.
Furthermore, numerical experiments demonstrate that the proposed method is robust to slight violations of the rigid pairwise similarity constraint caused by measurement noise or round-off errors.

% Structure of the paper
This paper is organized as follows: Section~\ref{sec:PRELIMINARIES} reviews necessary quaternion preliminaries, defines the notion of pairwise similarity, and provides essential results regarding the homogeneous singular Sylvester equation in quaternions. Section~\ref{sec:SOLUTION_WAHBAS_PROBLEM} presents the main theoretical contributions of this paper, that is the existence criterion for zero Wahba's cost, the complete closed-form expression of the solution set in quaternions, and the Minimal Analytic Rotation Algorithm (MARA). In Section~\ref{sec:performance} we analyse the computational efficiency and the numerical stability of MARA and benchmark it against ESOQ2~\cite{mortari2000_second}, Markley's method~\cite{markley2002_fast}, and FLAE~\cite{WuZhou2018_FLAE}. Finally, Section~\ref{sec:CONCLUSION} concludes this work.

%*******************************************************************************
% SECTION 2
%*******************************************************************************

\section{Preliminaries}\label{sec:PRELIMINARIES}
\subsection{Quaternion Basics}
The set of quaternions forms a division ring over the real numbers with basis $\{1, \qi, \qj, \qk\}$. The imaginary units satisfy the relations $\qi[][2] = \qj[][2] = \qk[][2] = \qi\qj\qk = -1$, $\qi\qj = -\qj\qi = \qk$, $\qj\qk = -\qk\qj = \qi$, and $\qk\qi = -\qi\qk = \qj$.
A quaternion $\qa= \qa[w]+\qa[x]\qi+\qa[y]\qj+\qa[z]\qk$ can be written as
$\qa=\rea + \ima$, where $\rea=\qa[w]$ is the real scalar part of $\qa$
and $\ima = \qa[x]\qi+\qa[y]\qj+\qa[z]\qk$ is the vector imaginary
part of $\qa$. The conjugate of a quaternion $\qa$ is defined as
$\qa[][*] = \rea - \ima$. The norm of a quaternion $\qa$ is defined as 
$\Abs{\qa} = \sqrt{\smash[b]{\qa\qa[][*]}} = \sqrt{\smash[b]{\qa[][*]\!\qa}}
= \sqrt{\smash[b]{\qa[w][2] + \qa[x][2] + \qa[y][2] + \qa[z][2]}}$. A quaternion
$\qa$ is called unit if $\Abs{\qa} = 1$ and pure if $\rea=0$. 
By an extension of Euler's formula 
%\cite{ward1997_quaternion}
a quaternion $\qa$
can also be written as
$\qa = e^{\frac{\theta}{2}\qb} = \cos (\tfrac{\theta}{2}) + \qb \sin (\tfrac{\theta}{2})$, where $\qb$ is a pure quaternion that represents a rotation axis and $\theta$ is a rotation angle around axis $\qb$.
The inverse of a nonzero quaternion $\qa$ is given by $\qa[][-1] = \qa[][*] /\Abs{\qa}^2$.
Given two quaternions $\qa, \qb$ the quaternion product $\qa\qb$ is defined as
	\begin{align*}
    		\qa\qb = (\rea)(\reb) - \ima \! \cdot \imb + (\rea)(\imb) + (\reb)(\ima) + \ima \times \imb,
%    		\label{eq:QUAT_MUL}
    \end{align*}
where $(\,\cdot\,, \times)$ denote the scalar and cross product, respectively. The quaternion product satisfies the following properties
\begin{align*}
\Abs{\qa\qb} = \Abs{\qa}\kern0.1em\Abs{\qb}, \qquad 
\Re{(\qa\qb)} = \Re{(\qb\qa)}, \qquad
(\qa\qb)^{-1} = \qb[][-1]\qa[][-1],
\end{align*}
and the squared norm of the sum of quaternions satisfies
\begin{align*}
	\Abs{\qa \pm \qb}^2 = \Abs{\qa}^2 \pm 2 \Re(\qa \qb[][*]) + \Abs{\qb}^2.
\end{align*}
In particular, if $\qa, \qb$ are pure quaternions, then it holds
\begin{align*}
\qa[][2] = -\qa\cdot\qa=-\Abs{\qa}^2, \quad \qa\qb = -\qa \cdot \qb + \qa \times \qb, \quad \qa\qb+\qb\qa=2\Re(\qa\qb).
%\label{eq:PROPERTIES_PRODUCT_PURES}
\end{align*}

\begin{lemma}\label{lemma:AXIS_PARALLEL_QUATERNIONS}
For nonreal quaternions $\qa, \qb$ it holds $\qa\qb = \qb\qa$ if and only if 
there exist real numbers $\mu,\lambda$ such that $\qb = \mu + \lambda(\ima)$.
\end{lemma}

\begin{proof}
Since $\Re{(\qa\qb)} = \Re{(\qb\qa)}$ it holds $\Re{(\qa\qb)} - \Re{(\qb\qa)} = 0$ such that
\begin{align*}
	\qa\qb - \qb\qa = \Im(\qa\qb) - \Im(\qb\qa) = \ima \times \imb - \imb \times \ima = 2\,(\ima\times\imb),
\end{align*}
that is $\qa\qb = \qb\qa$ if and only if $\ima = \lambda(\imb)$, which yields the assertion of the lemma.
\end{proof}

\begin{lemma}\label{lemma:AC_EQUAL_MINUS_CA}
For nonzero pure quaternions $\qa,\qb$, and $\qc = \qa\times\qb$, it holds $\qa\qc = -\qc\qa$.
\end{lemma}

\begin{proof}
Since $\qa\cdot\qc = \qa\cdot(\qa\times\qb)= 0$, we have $\qa\qc = \qa\times\qc = -\qc\times\qa = -\qc\qa$.
\end{proof}

\begin{definition}\label{def:SIMILAR_QUATERNIONS}
Nonreal quaternions $\qa,\qb$ are called similar if there exists a nonzero
quaternion $\qp$ such that $\qp[][-1]\qa\qp = \qb$. In that case, we write
$\qa \sim \qb$.
\end{definition}

\begin{lemma}[\NegSpace\cite{radak+scheunert++2025_on}]
\label{lemma:SIMILAR_QUATERNIONS_CHARACTERIZATION}
Nonreal quaternions $\qa,\qb$ are similar if and only if $\rea = \reb$ and
$\Abs{\qa} = \Abs{\qb}$.
\end{lemma}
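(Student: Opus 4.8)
The plan is to treat the two implications separately, using only the algebraic identities collected above.

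\emph{Necessity.} Suppose $\qa\sim\qb$, say $\qp[][-1]\qa\qp=\qb$ with $\qp\neq0$. For the real part I apply $\Re(\qx\qy)=\Re(\qy\qx)$ from \eqref{eq:QUAT_MUL_PROPERTIES} with $\qx=\qp[][-1]\qa$ and $\qy=\qp$, which gives $\reb=\Re(\qp[][-1]\qa\qp)=\Re(\qp\qp[][-1]\qa)=\rea$. For the modulus I use $\Abs{\qx\qy}=\Abs{\qx}\Abs{\qy}$ twice, together with $\Abs{\qp[][-1]}\Abs{\qp}=\Abs{\qp[][-1]\qp}=1$, obtaining $\Abs{\qb}=\Abs{\qp[][-1]}\Abs{\qa}\Abs{\qp}=\Abs{\qa}$. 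This direction is immediate.

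\emph{Sufficiency.} Assume $\rea=\reb$ and $\Abs{\qa}=\Abs{\qb}$, and set $\qx\coloneqq\Im(\qa)$, $\qy\coloneqq\Im(\qb)$, both pure. From $\Abs{\qa}^2=(\rea)^2+\Abs{\qx}^2$ and $\Abs{\qb}^2=(\reb)^2+\Abs{\qy}^2$ the two hypotheses force $\Abs{\qx}=\Abs{\qy}=:r$, with $r>0$ since $\qa$ is nonreal. Because real scalars are central, $\qp[][-1]\qa\qp=\qb$ is equivalent to $\qa\qp=\qp\qb$; expanding $\qa=\rea+\qx$, $\qb=\rea+\qy$ and cancelling the common term $\rea\qp$, this reduces to $\qx\qp=\qp\qy$. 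So it suffices to exhibit a nonzero $\qp$ with $\qx\qp=\qp\qy$ for pure $\qx,\qy$ of equal modulus $r>0$. I distinguish two cases. If $\qx+\qy\neq0$, take $\qp=\qx+\qy$; then, using $\qx[][2]=-\Abs{\qx}^2$ and $\qy[][2]=-\Abs{\qy}^2$,
\[
\qx(\qx+\qy)=-\Abs{\qx}^2+\qx\qy=\qx\qy-\Abs{\qy}^2=(\qx+\qy)\qy ,
\]
the middle equality being exactly $\Abs{\qx}=\Abs{\qy}$. If $\qx+\qy=0$, i.e.\ $\qy=-\qx$ with $\qx\neq0$, I instead need $\qp$ with $\qx\qp+\qp\qx=0$; choosing any nonzero pure quaternion $\qp$ orthogonal to $\qx$ (possible since the orthogonal complement of $\qx$ in the imaginary $3$-space is two-dimensional), the identity $\qx\qp+\qp\qx=2\Re(\qx\qp)=-2(\qx\cdot\qp)=0$ gives $\qx\qp=-\qp\qx=\qp\qy$. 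In either case $\qp\neq0$ is invertible in the division ring of quaternions, hence $\qp[][-1]\qa\qp=\qb$ and $\qa\sim\qb$.

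I expect the necessity part to be a one-line consequence of \eqref{eq:QUAT_MUL_PROPERTIES}; the only genuinely delicate point is in the converse, namely the degenerate configuration $\Im(\qa)=-\Im(\qb)$. There the natural witness $\Im(\qa)+\Im(\qb)$ vanishes, so one must fall back on the separate construction via a pure quaternion orthogonal to $\Im(\qa)$ --- and no single closed-form expression in $\qx,\qy$ can serve uniformly across both cases, essentially for reasons of continuity. Beyond this case split, everything is routine manipulation with $\qx[][2]=-\Abs{\qx}^2$ and the multiplicativity of the modulus.
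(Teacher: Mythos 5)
Your proof is correct, but note that there is nothing in this paper to compare it against: Lemma~\ref{lemma:SIMILAR_QUATERNIONS_CHARACTERIZATION} is imported without proof from \cite{radak+scheunert++2025_on}. Taken on its own, your argument is sound and self-contained. The necessity direction is the expected one-liner from \eqref{eq:QUAT_MUL_PROPERTIES} together with multiplicativity of the modulus. The sufficiency direction, after the reduction $\qa\qp=\qp\qb \Leftrightarrow (\ima)\qp=\qp(\imb)$ obtained by cancelling $(\rea)\qp$ (the same ``drop the real parts'' step the paper uses in Corollary~\ref{corollary:WAHBAS_EQUATION_WITH_PURE_QUATS}), constructs an explicit witness, and its structure mirrors exactly the solution theory of the singular Sylvester equation recalled in Theorem~\ref{theorem:NONZERO_SOLUTIONS_OF_THE_HOMOGENEOUS_SYLVESTER_EQUATION}: your choice $\qp=\ima+\imb$ is the $\mu\,\Im(\qa+\qb)$ family of solutions of $\qa\qq=\qq\qb$, and your fallback in the degenerate case $\ima=-\imb$ (any nonzero pure $\qp$ with $\ima\cdot\qp=0$, using $\qx\qp+\qp\qx=2\Re(\qx\qp)$) is precisely the clause ``subject to $\ima\cdot\imq=0$ if $\ima=-\imb$'' appearing there. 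So your argument effectively reproves, in the special case needed here, the machinery the paper cites; your closing observation that the degenerate configuration genuinely requires a separate branch is consistent with how that theorem is stated.
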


\subsection{Pairwise Similar Quaternions}\label{sec:PAIRWISE_SIMILAR_QUATERNIONS}

\begin{definition}\label{def:PAIRWISE_SIMILAR_QUATERNIONS}
Sets of nonreal quaternions $\{\qa[1],\qa[2],\dots,\qa[n] \}$, $\{\qb[1],\qb[2],\dots,\qb[n]\}$ with $n\geq 2$ and $\ima[\ell]\neq\ima[m]\,$ for all $\ell,m \in \{1,2,\dots,n\}$, $\ell\neq m$, are called pairwise similar if there exists a nonzero quaternion $\qp$ such that $\qp[][-1]\qa[\ell]\qp = \qb[\ell]\,$ for all $\ell \in \{1, 2, \dots, n\}$. In that case we write $(\qa[1],\qa[2],\dots, \qa[n]) \sim (\qb[1],\qb[2],\dots, \qb[n])$ and denote the set of all such quaternions $\qp$ by $Q_n$.
\end{definition}

\begin{lemma}\label{lemma:REDUCTION_SIMILARITY_N_TO_2}
For sets of nonreal quaternions $\{\qa[1],\qa[2],\dots,\qa[m] \}$, $\{\qb[1],\qb[2],\dots,\qb[m]\}$ that satisfy $(\qa[1],\qa[2],\dots, \qa[m]) \sim (\qb[1],\qb[2],\dots, \qb[m])$ for all $m\in\{1,2,\dots,n\}$, $n\geq 2$, it holds $Q_1 \supset Q_2 = Q_3 = \cdots = Q_n$.
\end{lemma}

\begin{proof}
By definition, it holds $Q_1 \supseteq Q_2 \supseteq \cdots \supseteq Q_n$ since for all $m \in \{2,3,\dots,n\}$ each $Q_m$ imposes the constraints of $Q_{m-1}$ and the additional constraint $\qp[][-1]\qa[m]\qp = \qb[m]$. Note, since $(\qa[1],\qa[2],\dots, \qa[n]) \sim (\qb[1],\qb[2],\dots, \qb[n])$ the set $Q_n$ is not empty. Therefore, it suffices to show $Q_2 \subseteq Q_3 \subseteq \dots \subseteq Q_n$ and $Q_1 \supset Q_2$.
Let $\qp \in Q_m$ with $m \in \{3,4,\dots,n\}$. Since $Q_{m-1} \supseteq Q_m$ it holds $\qp \in Q_{m-1}$. Further, there exists a nonzero quaternion $\qq$ such that $\qp\qq \in Q_{m-1}$ and hence
\begin{align*}
	\qb[\ell] = (\qp\qq)^{-1} \qa[\ell](\qp\qq) = \qq[][-1]\qp[][-1]\qa[\ell]\qp\qq = \qq[][-1]\qb[\ell]\qq, \quad \ell\in\{1,2,\dots,m-1\}.
\end{align*}
Since $(\qa[1],\qa[2],\dots, \qa[n]) \sim (\qb[1],\qb[2],\dots, \qb[n])$, by Lemma~\ref{lemma:AXIS_PARALLEL_QUATERNIONS} there exist real numbers $\mu_{\ell},\lambda_{\ell}$ that satisfy $\qq = \mu_{\ell} + \lambda_{\ell}(\imb[\ell])$ for all $\ell \in\{1,2,\dots, m-1\}$. Therefore, we must have $\mu_{1} = \mu_{2} = \dots = \mu_{m-1} \neq 0$ and $\lambda_{1} = \lambda_{2} = \dots = \lambda_{m-1} = 0$ such that $\qq$ is a nonzero real number. Then it holds
\begin{align*}
	(\qp\qq)^{-1} \qa[m](\qp\qq) = \qq[][-1]\qp[][-1]\qa[m]\qp\qq = \qq[][-1]\qb[m]\qq = \qb[m]
\end{align*}
such that $\qp\qq \in Q_{m}$, that is $Q_{m-1} = Q_m$. By induction we obtain $Q_2 = Q_3 = \cdots = Q_n$.
Now let $\qp\in Q_2$ and hence $\qp\in Q_1$. Further, for arbitrary real numbers $\mu,\lambda$ let $\qq = \mu + \lambda(\imb[1])$. Then, by Lemma~\ref{lemma:AXIS_PARALLEL_QUATERNIONS} we have 
\begin{align*}
  \qq\qb[1] = \qb[1]\qq \quad \Longleftrightarrow \quad \qb[1] = \qq[][-1]\qb[1]\qq = \qq[][-1]\qp[][-1]\qa[1]\qp\qq = (\qp\qq)^{-1}\qa(\qp\qq),
\end{align*}
that is $\qp\qq \in Q_1$. For $\lambda \neq 0$ the quaternion $\qq$ is nonreal. In that case we have $\imq = \lambda(\imb[1]) \neq \nu(\imb[2])$ for all real numbers $\nu$ such that
\begin{align*}
	(\qp\qq)^{-1} \qa[2](\qp\qq) = \qq[][-1]\qp[][-1]\qa[2]\qp\qq = \qq[][-1]\qb[2]\qq \neq \qb[2]
\end{align*}
and hence $\qp\qq \notin Q_2$, that is $Q_1 \supset Q_2$.
\end{proof}

\begin{lemma}\label{lemma:PAIRWISE_SIMILAR_QUATERNIONS_CHARACTERIZATION}
For nonreal quaternions $\{\qa[1],\qa[2]\},\{\qb[1],\qb[2]\}$ it holds $(\qa[1],\qa[2]) \sim (\qb[1],\qb[2])$ if and only if $\qa[1] \sim \qb[1]$, $\qa[2] \sim \qb[2]$, and $\Re(\qa[1]\qa[2]) = \Re(\qb[1]\qb[2])$.
\end{lemma}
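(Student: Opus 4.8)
The plan is to prove the two implications separately. The forward one is bookkeeping: if $(\qa[1],\qa[2])\sim(\qb[1],\qb[2])$ via a nonzero $\qp$, then $\qa[1]\sim\qb[1]$ and $\qa[2]\sim\qb[2]$ are immediate from Definition~\ref{def:SIMILAR_QUATERNIONS}, and for the scalar condition one writes $\qb[1]\qb[2]=\RBrace{\qp[][-1]\qa[1]\qp}\RBrace{\qp[][-1]\qa[2]\qp}=\qp[][-1]\qa[1]\qa[2]\qp$ and invokes $\Re(xy)=\Re(yx)$ from \eqref{eq:QUAT_MUL_PROPERTIES} (applied with $x=\qp[][-1]$, $y=\qa[1]\qa[2]\qp$) to get $\Re(\qb[1]\qb[2])=\Re\RBrace{\qa[1]\qa[2]\qp\qp[][-1]}=\Re(\qa[1]\qa[2])$.

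For the backward direction, which is where the work lies, I would first reduce to a single equation. Assuming the three conditions, fix a nonzero $\qp$ with $\qp[][-1]\qa[1]\qp=\qb[1]$ (available since $\qa[1]\sim\qb[1]$) and set $\qc\coloneqq\qp[][-1]\qa[2]\qp$. Then $\qc\sim\qb[2]$, both being similar to $\qa[2]$, so Lemma~\ref{lemma:SIMILAR_QUATERNIONS_CHARACTERIZATION} gives $\Re(\qc)=\Re(\qb[2])$ and $\Abs{\qc}=\Abs{\qb[2]}$, while $\qb[1]\qc=\qp[][-1]\qa[1]\qa[2]\qp$ yields $\Re(\qb[1]\qc)=\Re(\qa[1]\qa[2])=\Re(\qb[1]\qb[2])$. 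It now suffices to find a nonzero $s$ that commutes with $\qb[1]$ and satisfies $s^{-1}\qc s=\qb[2]$, for then $\qp s$ is nonzero and conjugates $\qa[1]$ to $s^{-1}\qb[1]s=\qb[1]$ and $\qa[2]$ to $s^{-1}\qc s=\qb[2]$, witnessing $(\qa[1],\qa[2])\sim(\qb[1],\qb[2])$.

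To construct such an $s$, set $v\coloneqq\Im(\qb[1])$, which is nonzero since $\qb[1]$ is nonreal. A short computation with the product formula shows that a quaternion commutes with $\qb[1]$ exactly when its imaginary part is parallel to $v$; hence the unit quaternions commuting with $\qb[1]$ form a circle whose conjugation action fixes the scalar part and the direction of $v$ and, on the $2$-plane $v^{\perp}$ of pure quaternions orthogonal to $v$, realizes every planar rotation (one sees this by writing such a unit as $\cos\theta+\sin\theta\,(v/\Abs{v})$). Decomposing $\Im(\qc)=c_{\parallel}+c_{\perp}$ and $\Im(\qb[2])=d_{\parallel}+d_{\perp}$ into parts parallel and perpendicular to $v$ and using $\Re(xy)=\Re(x)\Re(y)-\Im(x)\cdot\Im(y)$, the equalities $\Re(\qb[1]\qc)=\Re(\qb[1]\qb[2])$ and $\Re(\qc)=\Re(\qb[2])$ force $v\cdot c_{\parallel}=v\cdot d_{\parallel}$, hence $c_{\parallel}=d_{\parallel}$; then $\Abs{\qc}=\Abs{\qb[2]}$ forces $\Abs{c_{\perp}}=\Abs{d_{\perp}}$. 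Since $c_{\perp}$ and $d_{\perp}$ are equal-length vectors in the $2$-plane $v^{\perp}$, some rotation about $v$ carries one to the other, and a unit $s$ commuting with $\qb[1]$ that induces it satisfies $s^{-1}\qc s=\qb[2]$, as required. I expect this alignment to be the main obstacle: the hypotheses enter in a tightly balanced way, with $\Re(\qa[1]\qa[2])=\Re(\qb[1]\qb[2])$ pinning down the component of $\qc$ along the axis of $\qb[1]$ and $\Abs{\qc}=\Abs{\qb[2]}$ being exactly what leaves the remaining perpendicular components matchable by a single rotation.
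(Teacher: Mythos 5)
Your proof is correct, but it takes a genuinely different route from the paper's. For the forward implication the paper adds the two conjugation identities to obtain $(\qa[1]+\qa[2]) \sim (\qb[1]+\qb[2])$ and then extracts $\Re(\qa[1]\qa[2]) = \Re(\qb[1]\qb[2])$ from the norm-of-sum identity \eqref{eq:NORM_OF_QUAT_SUM} combined with Lemma~\ref{lemma:SIMILAR_QUATERNIONS_CHARACTERIZATION}, whereas you get it in one line from cyclic invariance of the real part applied to $\qb[1]\qb[2] = \qp[][-1]\qa[1]\qa[2]\qp$. For the converse the paper again passes through the sums: it shows $(\qa[1]+\qa[2]) \sim (\qb[1]+\qb[2])$, fixes $\qq[1]$ with $\qq[1][-1]\qa[1]\qq[1] = \qb[1]$, and adjusts by a factor $\qq[2]$ commuting with $\qb[1]$ so that the sum identity forces $\qq[][-1]\qa[2]\qq = \qb[2]$; the existence of a commuting $\qq[2]$ that is compatible with the particular solution of the sum similarity is left largely implicit there. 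You never form the sums; instead you make the adjustment explicit: you characterize the centralizer of $\qb[1]$ as the quaternions whose imaginary part is parallel to $v = \Im\qb[1]$, observe that conjugation by its unit elements realizes every rotation of the plane $v^{\perp}$, and show that the hypotheses pin down the $v$-component of $\qc = \qp[][-1]\qa[2]\qp$ (via $\Re(\qb[1]\qc) = \Re(\qb[1]\qb[2])$) and the length of its perpendicular component (via $\Abs{\qc} = \Abs{\qb[2]}$), so one rotation about $v$ completes the alignment. Your approach buys a self-contained, geometrically transparent argument that shows exactly where each hypothesis enters and fully justifies the step the paper compresses; the paper's approach buys brevity and reuse of machinery already in place (the norm-of-sum identity and the similarity criterion applied to $\qa[1]+\qa[2]$ and $\qb[1]+\qb[2]$), staying entirely at the level of algebraic identities.
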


\begin{proof}
First assume that $(\qa[1],\qa[2]) \sim (\qb[1],\qb[2])$. Then, there exists a nonzero quaternion $\qp$ such that $\qp[][-1]\qa[1]\qp = \qb[1]$ and $\qp[][-1]\qa[2]\qp = \qb[2]$. It follows that $\qp[][-1](\qa[1] + \qa[2])\qp = \qb[1] + \qb[2]$. Hence, we have $\qa[1] \sim \qb[1]$, $\qa[2] \sim \qb[2]$, and $(\qa[1] + \qa[2]) \sim (\qb[1] + \qb[2])$.
Then, by Lemma~\ref{lemma:SIMILAR_QUATERNIONS_CHARACTERIZATION} we obtain $\Abs{\qa[1]} = \Abs{\qb[1]}$, $\Abs{\qa[2]} = \Abs{\qb[2]}$, and $\Abs{\qa[1] + \qa[2]} = \Abs{\qb[1] + \qb[2]}$ such that
	\begin{align*}
	    \Abs{\qa[1]}^2 + 2\Re(\qa[1]\qa[2][*]) + \Abs{\qa[2]}^2 = \Abs{\qa[1] + \qa[2]}^2 = \Abs{\qb[1] + \qb[2]}^2 = \Abs{\qb[1]}^2 + 2\Re(\qb[1]\qb[2][*]) + \Abs{\qb[2]}^2,
%	    \label{eq:NORM_OF_QUAT_SUM_A1A2_B1B2}
	\end{align*}
which yields $\Re(\qa[1]\qa[2]) = \Re(\qb[1]\qb[2])$. Conversely, assume $\qa[1] \sim \qb[1]$, $\qa[2] \sim \qb[2]$, and $\Re(\qa[1]\qa[2]) = \Re(\qb[1]\qb[2])$. Then, by Lemma~\ref{lemma:SIMILAR_QUATERNIONS_CHARACTERIZATION} it holds
\begin{align*}
	\Re(\qa[1] + \qa[2]) = \rea[1] + \rea[2] = \reb[1] + \reb[2] = \Re(\qb[1] + \qb[2])
\end{align*}
and further 
\begin{align*}
\Abs{\qa[1] + \qa[2]}^2 = \Abs{\qa[1]}^2 + 2\Re(\qa[1]\qa[2][*]) + \Abs{\qa[2]}^2 = \Abs{\qb[1]}^2 + 2\Re(\qb[1]\qb[2][*]) + \Abs{\qb[2]}^2 = \Abs{\qb[1] + \qb[2]}^2,
\end{align*}
such that $(\qa[1] + \qa[2]) \sim (\qb[1] + \qb[2])$. Hence, there exists a nonzero quaternion $\qq$ such that
\begin{align}
	\qq[][-1](\qa[1] + \qa[2])\qq = \qb[1] + \qb[2].
	\label{eq:SUM_SIMILARITY}
\end{align}
Next, assume that $\qq[1]$ is a nonzero quaternion with $\qq[1][-1]\qa[1]\qq[1]=\qb[1]$. Set $\qq = \qq[1]\qq[2]$, where $\qq[2]$ is a nonzero quaternion with $\qb[1]\qq[2] = \qq[2]\qb[1]$. Then we have
\begin{align}
	\qq[][-1]\qa[1]\qq[]= (\qq[1]\qq[2])^{-1}\qa[1]\qq[1]\qq[2] = \qq[2][-1]\qq[1][-1]\qa[1]\qq[1]\qq[2] = \qq[2][-1]\qb[1]\qq[2] = \qq[2][-1]\qq[2]\qb[1] = \qb[1]. 
	\label{eq:PAIRWISE_SIMILARITY_PROOF_PART_1}
\end{align}
Substituting \eqref{eq:PAIRWISE_SIMILARITY_PROOF_PART_1} into \eqref{eq:SUM_SIMILARITY} yields $\qq[][-1]\qa[2]\qq[] = \qb[2]$, which completes the proof.
\end{proof}

%*******************************************************************************
% SECTION 3
%*******************************************************************************

\subsection{Singular Sylvester equation in quaternions}\label{sec:SSE}
When attempting to solve Wahba's problem, constraints resulting from the noncommutativity of the quaternion product imply
that the solutions are related to the homogeneous singular Sylvester equation in quaternions
\begin{align}
\qa\qq - \qq\qb = 0.
\label{eq:HOMOGENEOUS_SINGULAR_SYLVESTER_EQUATION}
\end{align}
We summarize the results from~\cite{radak+scheunert++2025_on} and recall the necessary and sufficient condition for solvability, as well as the general solution of~\eqref{eq:HOMOGENEOUS_SINGULAR_SYLVESTER_EQUATION}, both of which are fundamental to the development of our main results presented in Section~\ref{sec:SOLUTION_WAHBAS_PROBLEM}.
\begin{lemma}[\NegSpace\cite{radak+scheunert++2025_on}]
\label{lemma:QUATERNION_SQRT}
Let $\qa$ be a nonzero quaternion. Then, $\qq[][2] - \qa = 0$ has the
solutions
\vspace*{-1ex}
\begin{align*}
  \qq = \pm \sqrt{\Abs{\qa}} \, \frac{\qp}{\Abs{\qp}},
%  \label{eq:QUATERNION_SQRT}
\end{align*}
where $\qp$ is either an arbitrary nonzero pure quaternion if $\qa$ is a negative real
number, or $\qp = \qa + \Abs{\qa}$ if $\qa$ is a nonreal quaternion or a positive
real number.
\end{lemma}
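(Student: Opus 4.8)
The plan is to split along the case distinction in the statement; the point that makes it work is the quadratic identity satisfied by every quaternion, which turns the non-trivial case into a short computation. Suppose first that $\qa$ is a negative real number, so $\qa = -\Abs{\qa}$. Writing $\qq = \req + \imq$ with $\imq$ the pure part, we have $\qq[][2] = (\req)^2 - \Abs{\imq}^2 + 2(\req)\imq$; equating imaginary parts forces $(\req)\imq = 0$, and since a real quaternion squares to a nonnegative number it cannot equal $\qa \neq 0$, so $\req = 0$. Then $\qq[][2] = -\Abs{\qq}^2 = \qa$ is equivalent to $\Abs{\qq} = \sqrt{\Abs{\qa}}$, and conversely every pure quaternion of that modulus squares to $\qa$. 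This is exactly the family $\pm\sqrt{\Abs{\qa}}\,\qp/\Abs{\qp}$ with $\qp$ an arbitrary nonzero pure quaternion.

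Next suppose $\qa$ is nonreal or a positive real number, and set $\qp = \qa + \Abs{\qa}$. The engine of the proof is the identity $\qa[][2] = 2\Re(\qa)\,\qa - \Abs{\qa}^2$, valid for every quaternion. Using it together with the fact that the real scalar $\Abs{\qa}$ is central, I would compute
\begin{align*}
\qp[][2] &= \qa[][2] + 2\Abs{\qa}\,\qa + \Abs{\qa}^2 = 2\bigl(\Abs{\qa} + \Re(\qa)\bigr)\,\qa, \\
\Abs{\qp}^2 &= \qp\,\qp[][*] = \Abs{\qa}^2 + \Abs{\qa}\bigl(\qa + \qa[][*]\bigr) + \Abs{\qa}^2 = 2\Abs{\qa}\bigl(\Abs{\qa} + \Re(\qa)\bigr).
\end{align*}
Under the current hypothesis $\Abs{\qa} + \Re(\qa) > 0$, hence $\Abs{\qp} \neq 0$ and, pulling the real scalars out freely, $\bigl(\pm\sqrt{\Abs{\qa}}\,\qp/\Abs{\qp}\bigr)^2 = \Abs{\qa}\,\qp[][2]/\Abs{\qp}^2 = \qa$; both signs give solutions.

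To finish we must show there are no further solutions. For a solution $\qq = \req + \imq$, comparing imaginary parts in $\qq[][2] = \qa$ gives $2(\req)\imq = \Im(\qa)$. If $\qa$ is a positive real, then $\Im(\qa) = 0$ forces $\imq = 0$ and $(\req)^2 = \Abs{\qa}$, so $\qq = \pm\sqrt{\Abs{\qa}}$, which equals $\pm\sqrt{\Abs{\qa}}\,\qp/\Abs{\qp}$ since $\qp = 2\Abs{\qa}$ here. If $\qa$ is nonreal, then $\Im(\qa) \neq 0$, so $\req \neq 0$ and $\imq$ is a nonzero real multiple of $\Im(\qa)$; hence $\qq$ lies in the real plane $\operatorname{span}\{1, \Im(\qa)\}$, a subalgebra isomorphic to $\mathbb{C}$ that also contains $\qa$ and $\qp$. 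Inside this commutative subalgebra $\qq[][2] = \qa$ is an ordinary complex quadratic with exactly two roots, and since $\pm\sqrt{\Abs{\qa}}\,\qp/\Abs{\qp}$ already lie there and solve it, they exhaust the solution set.

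The step I expect to need the most care is this completeness argument: one must first confine an arbitrary solution to the plane $\operatorname{span}\{1, \Im(\qa)\}$ before the familiar complex square-root picture applies, and keep the degenerate situations cleanly apart — the purely real $\qa$, and the boundary $\qp = 0$, which is precisely the negative-real case handled at the outset. The existence direction, by contrast, is nothing more than the identity $\qa[][2] = 2\Re(\qa)\,\qa - \Abs{\qa}^2$ followed by the two-line computation above.
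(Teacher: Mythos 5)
The paper states Lemma~\ref{lemma:QUATERNION_SQRT} without proof, importing it from \cite{radak+scheunert++2025_on}, so there is no in-paper argument to compare yours against; judged on its own, your proof is correct and complete. The existence direction is exactly the expected computation: from $\qa[][2] = 2\Re(\qa)\,\qa - \Abs{\qa}^2$ you get $\qp[][2] = 2\bigl(\Abs{\qa}+\Re(\qa)\bigr)\qa$ and $\Abs{\qp}^2 = 2\Abs{\qa}\bigl(\Abs{\qa}+\Re(\qa)\bigr)$, and the hypothesis guarantees $\Abs{\qa}+\Re(\qa)>0$, so $\qp\neq 0$ and the normalized candidates square to $\qa$. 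Your completeness argument is also sound: comparing imaginary parts confines any solution to $\operatorname{span}\{1,\Im(\qa)\}$, a commutative subalgebra isomorphic to $\mathbb{C}$, where $\qq[][2]=\qa$ has exactly two roots, both already exhibited; and the negative-real case is handled correctly as the boundary case $\qp=\qa+\Abs{\qa}=0$, where the solution set is the full sphere of pure quaternions of modulus $\sqrt{\Abs{\qa}}$ (with the harmless proviso, which you state, that $\qp$ must be a \emph{nonzero} pure quaternion). One small gloss to tighten: in the positive-real case you say $\Im(\qa)=0$ ``forces $\imq=0$''; strictly, $2(\req)\imq=0$ leaves the alternative $\req=0$, which must be ruled out by noting that a pure quaternion squares to a nonpositive real, exactly as you argued in the negative-real case. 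With that one-line addition the case analysis is airtight.
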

\begin{lemma}[\NegSpace\cite{radak+scheunert++2025_on}]
\label{lemma:EXISTENCE_OF_SOLUTIONS_OF_THE_HOMOGENEOUS_SYLVESTER_EQUATION}
Let $\qa, \qb$ be nonreal quaternions. Then, 
\eqref{eq:HOMOGENEOUS_SINGULAR_SYLVESTER_EQUATION} has a nonzero solution if and only if
$\qa \sim \qb$.
\end{lemma}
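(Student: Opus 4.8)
The plan is to unpack Definition~\ref{def:SIMILAR_QUATERNIONS} directly, using only the fact that the quaternions form a division ring, so that every nonzero quaternion is invertible. Equation~\eqref{eq:HOMOGENEOUS_SINGULAR_SYLVESTER_EQUATION} reads $\qa\qq = \qq\qb$, and for an \emph{invertible} $\qq$ this is equivalent to the conjugation relation $\qq[][-1]\qa\qq = \qb$ that defines similarity; the whole statement is then a translation between these two forms. The solution set of~\eqref{eq:HOMOGENEOUS_SINGULAR_SYLVESTER_EQUATION} is an $\mathbb{R}$-linear subspace of the quaternions, and ``has a nonzero solution'' simply means this subspace is nontrivial.

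First I would prove necessity. Suppose $\qq \neq 0$ satisfies $\qa\qq - \qq\qb = 0$. Since $\qq$ is a nonzero quaternion it has an inverse $\qq[][-1] = \qq[][*]/\Abs{\qq}^2$; multiplying $\qa\qq = \qq\qb$ on the left by $\qq[][-1]$ gives $\qq[][-1]\qa\qq = \qb$, hence $\qa \sim \qb$ by Definition~\ref{def:SIMILAR_QUATERNIONS}. Then I would prove sufficiency. If $\qa \sim \qb$, Definition~\ref{def:SIMILAR_QUATERNIONS} furnishes a nonzero $\qp$ with $\qp[][-1]\qa\qp = \qb$; multiplying on the left by $\qp$ yields $\qa\qp = \qp\qb$, i.e.\ $\qa\qp - \qp\qb = 0$, so $\qp$ is a nonzero solution of~\eqref{eq:HOMOGENEOUS_SINGULAR_SYLVESTER_EQUATION}. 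This closes the equivalence. The hypothesis that $\qa,\qb$ are nonreal is needed only so that Definition~\ref{def:SIMILAR_QUATERNIONS} applies verbatim.

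There is no real obstacle here: once similarity is read off Definition~\ref{def:SIMILAR_QUATERNIONS}, the proof is the two-line argument above. The genuine mathematical work would only reappear if one wanted the ``intrinsic'' restatement of the lemma — that a nonzero solution exists if and only if $\rea = \reb$ and $\Abs{\qa} = \Abs{\qb}$ — since then the sufficiency direction requires actually constructing a solution $\qq$ from the equalities of real part and modulus, which can be done via quaternion square roots as in Lemma~\ref{lemma:QUATERNION_SQRT} together with Lemma~\ref{lemma:SIMILAR_QUATERNIONS_CHARACTERIZATION}. With the definition of similarity taken as given, however, no such construction is needed for the present statement.
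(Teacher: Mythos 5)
Your argument is correct, but note that the present paper gives no proof of this lemma at all: it is imported from~\cite{radak+scheunert++2025_on} as a background result, so there is no in-text proof to compare against. Relative to the definitions as stated here, your two-line translation is exactly right — since nonzero quaternions are invertible, multiplying $\qa\qq=\qq\qb$ on the left by $\qq[][-1]$ gives necessity, and multiplying $\qp[][-1]\qa\qp=\qb$ on the left by $\qp$ gives sufficiency — and with Definition~\ref{def:SIMILAR_QUATERNIONS} taken as the definition of similarity the lemma is essentially a restatement, as you observe. You also correctly locate where the nontrivial content lives: the working criterion in practice is the intrinsic one of Lemma~\ref{lemma:SIMILAR_QUATERNIONS_CHARACTERIZATION} ($\rea=\reb$ and $\Abs{\qa}=\Abs{\qb}$), and deducing the existence of a nonzero solution from those equalities requires an actual construction, which is what Lemma~\ref{lemma:QUATERNION_SQRT} and Theorem~\ref{theorem:NONZERO_SOLUTIONS_OF_THE_HOMOGENEOUS_SYLVESTER_EQUATION} supply in the cited reference; none of that is needed for the statement as phrased here.
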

\begin{lemma}[\NegSpace\cite{radak+scheunert++2025_on}]
\label{lemma:GENERAL_SOLUTION_OF_THE_HOMOGENEOUS_SYLVESTER_EQUATION}

Let $\qa,\qb$ be similar quaternions and $\qp$ an arbitrary quaternion. Then, the general
solution to \eqref{eq:HOMOGENEOUS_SINGULAR_SYLVESTER_EQUATION} is given by
\begin{align*}
  \qq = (\ima)\qp + \qp(\imb).
\end{align*}

\end{lemma}
\begin{theorem}[\NegSpace\cite{radak+scheunert++2025_on}]
\label{theorem:NONZERO_SOLUTIONS_OF_THE_HOMOGENEOUS_SYLVESTER_EQUATION}
Let $\qa,\qb$ be similar quaternions and $\lambda, \mu$ arbitrary real numbers with $\Abs{\lambda} + \Abs{\mu\,\Im{(\qa + \qb)}} \neq 0$. Then, the nonzero solutions to \eqref{eq:HOMOGENEOUS_SINGULAR_SYLVESTER_EQUATION} are given by
\begin{align}
  \qq = \lambda\sqrt{(\ima)(\imb[][*])} + \mu\,\Im{(\qa + \qb)},
  \label{eq:NONZERO_SOLUTIONS_OF_THE_HOMOGENEOUS_SYLVESTER_EQUATION}
\end{align}
subject to $\ima\cdot\imq = 0$ if $\ima =- \imb$.
\end{theorem}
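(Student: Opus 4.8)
The plan is to cancel the scalar parts, reduce \eqref{eq:HOMOGENEOUS_SINGULAR_SYLVESTER_EQUATION} to a purely imaginary intertwining relation, exhibit the two claimed generators as solutions, and then use a dimension count to conclude there are no others. Since $\qa\sim\qb$, Lemma~\ref{lemma:SIMILAR_QUATERNIONS_CHARACTERIZATION} gives $\rea=\reb$ and $\Abs{\qa}=\Abs{\qb}$, hence $\Abs{\ima}=\Abs{\imb}=:r>0$. Writing $\qa=\rea+\ima$, $\qb=\reb+\imb$ and cancelling $\rea\qq$ against $\reb\qq$, equation \eqref{eq:HOMOGENEOUS_SINGULAR_SYLVESTER_EQUATION} becomes equivalent to $\ima\,\qq=\qq\,\imb$; set $u:=\ima$ and $v:=\imb$. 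Conceptually, a nonzero solution satisfies $u=\qq\,v\,\qq[][-1]$, i.e.\ conjugation by $\qq$ rotates $v$ onto $u$; the conjugating quaternion, suitably scaled, squares to $uv^{*}=(\ima)(\imb[][*])$, which is why that square root enters \eqref{eq:NONZERO_SOLUTIONS_OF_THE_HOMOGENEOUS_SYLVESTER_EQUATION}.

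Next I would check the two generators directly. For $w:=\Im{(\qa+\qb)}=u+v$ one has $uw-wv=u^{2}-v^{2}=-r^{2}+r^{2}=0$. For the square root I split on whether $u+v=0$. If $u+v\neq0$, then $uv^{*}$ is not a negative real --- it is a negative real only if $u\times v=0$ and $u\cdot v<0$, which together with $\Abs{u}=\Abs{v}$ and the equality case of Cauchy--Schwarz forces $v=-u$ --- so Lemma~\ref{lemma:QUATERNION_SQRT} gives $\sqrt{uv^{*}}=\pm r\,p/\Abs{p}$ with $p:=uv^{*}+r^{2}$. Using $vv^{*}=r^{2}$ one rewrites $p=(u+v)v^{*}$, whence $up=(u^{2}+uv)v^{*}=r^{2}(u+v)=pv$, so $p$ --- and hence its real multiple $\sqrt{(\ima)(\imb[][*])}$ --- solves $\ima\,\qq=\qq\,\imb$. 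If instead $u=-v$, then $\Im{(\qa+\qb)}=0$ and $(\ima)(\imb[][*])=-r^{2}$ is a negative real, so by Lemma~\ref{lemma:QUATERNION_SQRT} the symbol $\sqrt{(\ima)(\imb[][*])}$ denotes an arbitrary pure quaternion of modulus $r$ and the $\mu$-term vanishes.

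Finally, to rule out further solutions I would expand $\qq=\req+\imq$ and split $\ima\,\qq-\qq\,\imb$ into scalar and imaginary parts, obtaining the equivalent system $(u-v)\cdot\imq=0$ and $\req(u-v)+(u+v)\times\imq=0$. In the generic case $u\neq\pm v$, the vectors $u-v$, $u+v$, $u\times v$ form an orthogonal basis of the imaginary quaternions; substituting $\imq=\beta(u+v)+\gamma(u\times v)$ and using $(u+v)\times(u\times v)=(u\cdot v+r^{2})(u-v)$ collapses the system to $\req=-\gamma(u\cdot v+r^{2})$ with $\beta,\gamma$ free, so the real solution space is two-dimensional; moreover $w=u+v\neq0$ and $\sqrt{(\ima)(\imb[][*])}$ has nonzero scalar part $u\cdot v+r^{2}$, so the two generators are $\mathbb R$-linearly independent and therefore span it --- hence every solution has the form $\lambda\sqrt{(\ima)(\imb[][*])}+\mu\,\Im{(\qa+\qb)}$. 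The degenerate cases need only a glance: for $u=v$ the system forces $\imq\parallel u$, giving the real span of $1$ and $\ima$, which matches the parameterization since here $\sqrt{(\ima)(\imb[][*])}=\pm r$; for $u=-v$ it forces $\req=0$ and $u\cdot\imq=0$, i.e.\ the pure quaternions orthogonal to $\ima$, which is exactly $\lambda\sqrt{(\ima)(\imb[][*])}$ subject to $\ima\cdot\imq=0$ (with $\Im{(\qa+\qb)}=0$). Such a quaternion is nonzero precisely when $(\lambda,\mu)\neq(0,0)$ if $\Im{(\qa+\qb)}\neq0$, and precisely when $\lambda\neq0$ if $\Im{(\qa+\qb)}=0$; using $\Abs{\sqrt{(\ima)(\imb[][*])}}=r>0$ this is in both cases the stated condition $\Abs{\lambda}+\Abs{\mu\,\Im{(\qa+\qb)}}\neq0$. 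I expect the main obstacle to be the verification that $\sqrt{(\ima)(\imb[][*])}$ solves the equation --- picking the correct representative from Lemma~\ref{lemma:QUATERNION_SQRT} and spotting the cancellation $up=pv$ --- together with bookkeeping the degenerate alignments $u=\pm v$ consistently with the parameterization.
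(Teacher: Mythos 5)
Your proof is correct, but note that this paper does not prove Theorem~\ref{theorem:NONZERO_SOLUTIONS_OF_THE_HOMOGENEOUS_SYLVESTER_EQUATION} at all: it is quoted from the prior work \cite{radak+scheunert++2025_on}, so there is no in-paper argument to compare against. What you supply is a self-contained, elementary derivation: reducing $\qa\qq=\qq\qb$ to the pure intertwining relation $\ima\,\qq=\qq\,\imb$, verifying the two generators directly (the identity $up=pv$ with $p=\ima\imb[][*]+r^2=(\ima+\imb)\imb[][*]$ is exactly the right cancellation, and your use of Lemma~\ref{lemma:QUATERNION_SQRT} to identify $\sqrt{\ima\imb[][*]}$ as a real multiple of $p$ when $\ima\neq-\imb$ is sound), and then closing the argument by splitting $\ima\qq-\qq\imb$ into scalar and vector parts and counting: the linear system $(\ima-\imb)\cdot\imq=0$, $\req(\ima-\imb)+(\ima+\imb)\times\imq=0$ has a two-dimensional real solution space in every case, which your two $\mathbb{R}$-independent generators therefore span, with the degenerate alignments $\ima=\pm\imb$ and the nondegeneracy condition $\Abs{\lambda}+\Abs{\mu\,\Im(\qa+\qb)}\neq 0$ all accounted for consistently with the side constraint $\ima\cdot\imq=0$ when $\ima=-\imb$. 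This buys a verifiable proof of the imported result within the framework of the present paper (only Lemmas~\ref{lemma:SIMILAR_QUATERNIONS_CHARACTERIZATION} and~\ref{lemma:QUATERNION_SQRT} are used), whereas the paper relies on the external reference; I see no gap in your argument.
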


%*******************************************************************************
% SECTION 4
%*******************************************************************************

\section{Solutions of Wahba's Problem for Pairwise Similar Quaternions}\label{sec:SOLUTION_WAHBAS_PROBLEM}

We now consider the problem in which Wahba's cost function $W_n$ for $n \geq 2$ is to be minimized
for the special case of pairwise similar quaternions
$\{\qa[1],\qa[2],\dots,\qa[n] \}$, $\{\qb[1],\qb[2],\dots,\qb[n]\}$.
In this case, there exist a nonreal quaternion $\qq$ such that
\begin{align}
  \qa[\ell] \sim \qb[\ell] 
  \quad \Longleftrightarrow \quad \qq[][-1]\qa[\ell]\qq - \qb[\ell] = 0
  \quad \Longleftrightarrow \quad \Abs{\qq[][-1]\qa[\ell]\qq - \qb[\ell]}^2 = 0
  \qquad \text{for all $\ell \in \{1,2,\dots,n\}$}.
  \label{eq:GENERAL_SOLUTION_SINGLE_WAHBAS_COST_FUNCTION}
\end{align}
The problem therefore consists of finding a quaternion $\qq$ such that $W_n(\qq) = 0$. In Lemma~\ref{lemma:REDUCTION_SIMILARITY_N_TO_2}, we have shown that it is
sufficient to consider arbitrary subsets $\{\qa[1],\qa[2]\}$,
$\{\qb[1],\qb[2]\}$ of $\{\qa[1],\qa[2],\dots,\qa[n] \}$, 
$\{\qb[1],\qb[2],\dots,\qb[n]\}$ with only two pairwise similar quaternions.
Referring to this result, it is sufficient to consider the problem
\begin{align}
	W_2(\qq) = \sum_{\ell= 1}^2 \Abs{\qq[][-1]\qa[\ell]\qq - \qb[\ell]}^2 = 0.
	\label{eq:WAHBAS_EQUATION}
\end{align}
Then, the resulting quaternion $\qq$ is also the solution to $W_n(\qq) = 0$.
Moreover, for similar quaternions $\qa[\ell],\qb[\ell]$ it holds 
$\reb[\ell] = \rea[\ell] = (\rea[\ell])\qq[][-1]\qq = \qq[][-1](\rea[\ell])\qq$ such that
\begin{align*}
  \qq[][-1]\qa[\ell]\qq - \qb[\ell] 
  = \qq[][-1](\rea[\ell])\qq  + \qq[][-1](\ima[\ell])\qq - \reb[\ell] - \imb
  = \qq[][-1](\ima[\ell])\qq - \imb[\ell].
\end{align*}
Hence, without loss of generality, we may consider the quaternions 
$\qa[\ell], \qb[\ell]$ to be pure. We will make use of this assumption
in proofs of the following theorems. However, the statements of the theorems
will be presented in their general form.

\subsection{Analytical Derivation of the Solution Set}\label{sec:DERIVATION}

\begin{theorem}\label{theorem:EXISTENCE_OF_SOLUTIONS_OF_WAHBAS_EQUATION}
Let $\{\qa[1],\qa[2]\},\{\qb[1],\qb[2]\}$ be sets of nonreal quaternions. Then, \eqref{eq:WAHBAS_EQUATION} has a solution if and only if $(\qa[1],\qa[2]) \sim (\qb[1], \qb[2])$.
\end{theorem}

\begin{proof}
Assume that \eqref{eq:WAHBAS_EQUATION} holds. Then, there exists a nonzero quaternion $\qq$ such that $\qq[][-1]\qa[1]\qq = \qb[1]$ and $\qq[][-1]\qa[2]\qq = \qb[2]$, that is $(\qa[1],\qa[2]) \sim (\qb[1], \qb[2])$. Conversely, assume $(\qa[1],\qa[2]) \sim (\qb[1], \qb[2])$. Then, there exists a nonzero quaternion $\qp$ such that $\qp[][-1]\qa[1]\qp = \qb[1]$ and $\qp[][-1]\qa[2]\qp = \qb[2]$. Hence, $\qq = \qp$ is a solution of \eqref{eq:WAHBAS_EQUATION}.
\end{proof}

\begin{lemma}\label{lemma:HOMOGENEOUS_WAHBAS_COST_FUNCTION_WITH_PERPENDICULAR_COMPONENTS}
Let $\{\qa[1],\qa[2]\},\{\qb[1],\qb[2]\}$ be sets of nonreal quaternions with $(\qa[1],\qa[2]) \sim (\qb[1], \qb[2])$. Then, \eqref{eq:WAHBAS_EQUATION} is equivalent to
\begin{align}
	\Abs{\qq[][-1](\ima[1])\qq - \imb[1]}^2 + \Abs{\qq[][-1](\ima[1]\times\ima[2])\qq - \imb[1]\times\imb[2]}^2 = 0.
	\label{eq:HOMOGENEOUS_WAHBAS_EQUATION_PERPENDICULAR_COMPONENTS}
\end{align}
\end{lemma}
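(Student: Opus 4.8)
The plan is to rewrite the vanishing of \eqref{eq:WAHBAS_EQUATION} as two simultaneous conjugation equations, replace the second observation by its cross product with the first, and keep careful track of the single scalar constraint that this replacement throws away. By the comment following Corollary~\ref{corollary:WAHBAS_EQUATION_WITH_PURE_QUATS} (applicable here because $(\qa[1],\qa[2])\sim(\qb[1],\qb[2])$ forces $\qa[\ell]\sim\qb[\ell]$ via Lemma~\ref{lemma:PAIRWISE_SIMILAR_QUATERNIONS_CHARACTERIZATION}), I may assume without loss of generality that $\qa[1],\qa[2],\qb[1],\qb[2]$ are pure, so that $\ima[\ell]=\qa[\ell]$, $\imb[\ell]=\qb[\ell]$, and $\Im(\qa[1]\qa[2])=\qa[1]\times\qa[2]$. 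Each side of the claimed equivalence is a sum of squared moduli, so it vanishes exactly when every term does, i.e.\ when the corresponding difference is zero. Thus \eqref{eq:WAHBAS_EQUATION} is equivalent to the conjunction $\qq[][-1]\qa[1]\qq=\qb[1]$, $\qq[][-1]\qa[2]\qq=\qb[2]$, whereas \eqref{eq:HOMOGENEOUS_WAHBAS_EQUATION_PERPENDICULAR_COMPONENTS} is equivalent to the conjunction $\qq[][-1]\qa[1]\qq=\qb[1]$, $\qq[][-1](\qa[1]\times\qa[2])\qq=\qb[1]\times\qb[2]$. It then remains to show that these two conjunctions are equivalent.

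The forward implication I would settle using that the map $x\mapsto\qq[][-1]x\qq$ is multiplicative and, by~\eqref{eq:QUAT_MUL_PROPERTIES}, preserves $\Re$, and hence also $\Im$ (it carries pure quaternions to pure quaternions). So from $\qq[][-1]\qa[1]\qq=\qb[1]$ and $\qq[][-1]\qa[2]\qq=\qb[2]$ one obtains $\qq[][-1](\qa[1]\qa[2])\qq=\qb[1]\qb[2]$, and taking imaginary parts gives $\qq[][-1](\qa[1]\times\qa[2])\qq=\qb[1]\times\qb[2]$.

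For the reverse implication I would assume $\qq[][-1]\qa[1]\qq=\qb[1]$ and $\qq[][-1](\qa[1]\times\qa[2])\qq=\qb[1]\times\qb[2]$, set $\qw\coloneqq\qq[][-1]\qa[2]\qq$ (again a pure quaternion), and note, by the same multiplicativity, that $\qb[1]\times\qw=\qq[][-1](\qa[1]\times\qa[2])\qq=\qb[1]\times\qb[2]$. Hence $\qb[1]\times(\qw-\qb[2])=0$, so $\qw-\qb[2]=\gamma\qb[1]$ for some real $\gamma$. To force $\gamma=0$ I would bring in the remaining clause of Lemma~\ref{lemma:PAIRWISE_SIMILAR_QUATERNIONS_CHARACTERIZATION}, namely $\Re(\qa[1]\qa[2])=\Re(\qb[1]\qb[2])$, equivalently $\qa[1]\cdot\qa[2]=\qb[1]\cdot\qb[2]$; since conjugation preserves $\Re$, also $\Re(\qb[1]\qw)=\Re(\qa[1]\qa[2])=\Re(\qb[1]\qb[2])$, i.e.\ $\qb[1]\cdot\qw=\qb[1]\cdot\qb[2]$. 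Substituting $\qw=\qb[2]+\gamma\qb[1]$ then yields $\gamma\Abs{\qb[1]}^2=0$, and $\qb[1]\neq0$ (it is nonreal and pure) forces $\gamma=0$, so $\qq[][-1]\qa[2]\qq=\qw=\qb[2]$, which closes the equivalence.

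The step I expect to be the real obstacle is this last one: the cross-product condition $\qb[1]\times(\qw-\qb[2])=0$ pins $\qw$ down only up to a real multiple of $\qb[1]$, so replacing $\qa[2]$ by $\qa[1]\times\qa[2]$ discards exactly one scalar equation, and that missing degree of freedom can only be reinstated through the inner-product equality $\Re(\qa[1]\qa[2])=\Re(\qb[1]\qb[2])$ that is built into pairwise similarity. Everything else is routine manipulation with the identities collected in Section~\ref{sec:PRELIMINARIES}.
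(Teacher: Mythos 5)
Your proof is correct, and its forward half is essentially the paper's own argument: after the reduction to pure quaternions, multiply the two conjugation identities, use $\Re(\qa[1]\qa[2])=\Re(\qb[1]\qb[2])$ from Lemma~\ref{lemma:PAIRWISE_SIMILAR_QUATERNIONS_CHARACTERIZATION}, and take imaginary parts to obtain $\qq[][-1](\qa[1]\times\qa[2])\qq=\qb[1]\times\qb[2]$. Where you genuinely add something is the converse direction. The paper's proof, as written, only derives \eqref{eq:HOMOGENEOUS_WAHBAS_EQUATION_PERPENDICULAR_COMPONENTS} from \eqref{eq:WAHBAS_EQUATION} and then asserts the equivalence; you make the reverse implication explicit by setting $\qw=\qq[][-1]\qa[2]\qq$, observing that the cross-product condition determines $\qw$ only up to a real multiple of $\qb[1]$ (i.e.\ $\qw-\qb[2]=\gamma\qb[1]$), and then eliminating $\gamma$ via $\qb[1]\cdot\qw=\qb[1]\cdot\qb[2]$, which is exactly the scalar constraint $\Re(\qa[1]\qa[2])=\Re(\qb[1]\qb[2])$ supplied by pairwise similarity. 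This is precisely the step the paper leaves implicit, and it is not vacuous: without that inner-product equality the claimed equivalence would fail, and Theorem~\ref{theorem:SOLUTION_TO_THE_HOMOGENEOUS_WAHBAS_EQUATION} relies on the lemma as a two-way statement. The only minor gloss in your write-up is the pure-quaternion reduction: after replacing $\qa[\ell],\qb[\ell]$ by their imaginary parts you still need $\ima[1]\cdot\ima[2]=\imb[1]\cdot\imb[2]$, which follows from $\Re(\qa[1]\qa[2])=\Re(\qb[1]\qb[2])$ together with $\Re\qa[\ell]=\Re\qb[\ell]$; this is the same silent step the paper takes, so it does not affect correctness.
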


\begin{proof}
We may, without loss of generality, consider $\qa[1], \qa[2], \qb[1], \qb[2]$ to be pure. Let $\qq$ be a solution of \eqref{eq:WAHBAS_EQUATION}. Then it equivalently holds $\qq[][-1]\qa[1]\qq = \qb[1]$ and $\qq[][-1]\qa[2]\qq = \qb[2]$. Hence, $\Abs{\qq[][-1]\qa[1]\qq - \qb[1]}^2 = 0$. Multiplying $\qq[][-1]\qa[2]\qq = \qb[2]$ by $\qq[][-1]\qa[1]\qq$ from the left and with $\Re(\qa[1]\qa[2]) = \Re(\qb[1]\qb[2])$, we obtain
\begin{align*}
	\qq[][-1]\qa[1]\qa[2]\qq = \qq[][-1]\qa[1]\qq\qb[2] = \qb[1]\qb[2] \quad \Longleftrightarrow \quad \qq[][-1](\qa[1]\times\qa[2])\qq = \qb[1]\times\qb[2],
%	\label{eq:PERPENDICULAR_COMPONENT_IS_ZERO}
\end{align*}
which completes the proof.
\end{proof}

\begin{theorem}\label{theorem:SOLUTION_TO_THE_HOMOGENEOUS_WAHBAS_EQUATION}
Let $\{\qa[1],\qa[2]\}$,$\{\qb[1],\qb[2]\}$ be sets of nonreal quaternions with $(\qa[1],\qa[2]) \sim (\qb[1], \qb[2])$. For arbitrary real numbers $\lambda_1, \lambda_2$, and $\mu_1$ with $\Abs{\lambda_1} + \Abs{\mu_1\,\Im{(\qa[1] + \qb[1])}} \neq 0$ and $\lambda_2 \neq 0$, the general solution of \eqref{eq:WAHBAS_EQUATION} is given by $\qq=\qq[1]\qq[2]$, where
\begin{align}
	\qq[1] &= \lambda_1\sqrt{(\ima[1])(\imb[1][*])} + \mu_1\,\Im{(\qa[1] + \qb[1])},\label{eq:SOLUTION_HOMOGENEOUS_WAHBAS_EQUATION_Q1}\\
	\qq[2] &= \lambda_2\sqrt{\qq[1][*](\ima[1]\times\ima[2])\qq[1](\imb[2]\times\imb[1])},
	\label{eq:SOLUTION_HOMOGENEOUS_WAHBAS_EQUATION_Q2}
\end{align}
subject to
\begin{align}
	\ima[1]\cdot\imq[1] = 0 \quad &\text{if} \quad \ima[1] =- \imb[1],\label{eq:SOLUTION_TO_THE_HOMOGENEOUS_WAHBAS_EQUATION_CONDITION_1}\\
	\qq[1][*](\ima[1]\times\ima[2])\qq[1]\cdot\imq[2] = 0 \quad &\text{if} \quad \qq[1][*](\ima[1]\times\ima[2])\qq[1] = -\imb[1]\times\imb[2].\label{eq:SOLUTION_TO_THE_HOMOGENEOUS_WAHBAS_EQUATION_CONDITION_2}
\end{align}
\end{theorem}

\begin{proof}
We may, without loss of generality, consider $\qa[1],\qa[2],\qb[1],\qb[2]$ to be pure. 
Let $\qq$ be a solution of \eqref{eq:WAHBAS_EQUATION}. Then, by 
Lemma~\ref{lemma:HOMOGENEOUS_WAHBAS_COST_FUNCTION_WITH_PERPENDICULAR_COMPONENTS}
it holds
\begin{align*}
		\Abs{\qq[][-1]\qa[1]\qq - \qb[1]}^2 + \Abs{\qq[][-1](\qa[1]\times\qa[2])\qq - \qb[1]\times\qb[2]}^2 = 0
%		\label{eq:SOLUTION_TWO_ALIGNMENTS_SUM}
\end{align*}
and hence
\begin{align}
		\Abs{\qq[][-1]\qa[1]\qq - \qb[1]}^2 = 0 \quad \text{and} \quad \Abs{\qq[][-1](\qa[1]\times\qa[2])\qq - \qb[1]\times\qb[2]}^2 = 0.
		\label{eq:SOLUTION_TWO_ALIGNMENTS}
\end{align}
Let $\qq[1]$ be a quaternion such that $\Abs{\qq[1][-1]\qa[1]\qq[1] - \qb[1]}^2 = 0$. 
Then, by \eqref{eq:GENERAL_SOLUTION_SINGLE_WAHBAS_COST_FUNCTION} and Theorem~\ref{theorem:NONZERO_SOLUTIONS_OF_THE_HOMOGENEOUS_SYLVESTER_EQUATION}
$\qq[1]$ is given by~\eqref{eq:SOLUTION_HOMOGENEOUS_WAHBAS_EQUATION_Q1} subject to $\Abs{\lambda_1} + \Abs{\mu_1(\qa[1] + \qb[1])} \neq 0$ and $\qa[1]\cdot\imq[1] = 0$ if $\qa[1] =- \qb[1]$. Further, it holds $\qa[1]\qq[1] = \qq[1]\qb[1]$. Next, for every nonzero quaternion $\qq$ there exists a unique nonzero quaternion $\qq[2]$ such that $\qq = \qq[1]\qq[2]$. Then, substituting $\qq = \qq[1]\qq[2]$ and $\qa[1]\qq[1] = \qq[1]\qb[1]$ into the left side equality in \eqref{eq:SOLUTION_TWO_ALIGNMENTS} yields
\begin{align}
	\qb[1] = (\qq[1]\qq[2])^{-1}\qa[1]\qq[1]\qq[2] = \qq[2][-1]\qq[1][-1]\qq[1]\qb[1]\qq[2] = \qq[2][-1]\qb[1]\qq[2] \quad \Longleftrightarrow \quad \qq[2]\qb[1] = \qb[1]\qq[2].
	\label{eq:HOMOGENEOUS_SOLUTION_PROOF_PART_1}
\end{align}
Further, substituting $\qq = \qq[1]\qq[2]$ into the right side equality in \eqref{eq:SOLUTION_TWO_ALIGNMENTS} yields
\begin{align}
	(\qq[1]\qq[2])^{-1}(\qa[1]\times\qa[2])\qq[1]\qq[2] - \qb[1]\times\qb[2] = \qq[2][-1]\qq[1][-1](\qa[1]\times\qa[2])\qq[1]\qq[2] - \qb[1]\times\qb[2] = 0.
	\label{eq:HOMOGENEOUS_SOLUTION_PROOF_PART_2}
\end{align}
Define $\qa[3]=\qq[1][-1](\qa[1]\times\qa[2])\qq[1]$ and $\qb[3] = \qb[1]\times\qb[2]$. Then $\rea[3] = \Re(\qa[1] \times\qa[2]) = 0$ and $\reb[3] = \Re(\qb[1] \times\qb[2]) = 0$. Since $(\qa[1],\qa[2]) \sim (\qb[1],\qb[2])$ it holds $\Abs{\qa[1]\qa[2]} = \Abs{\qb[1]\qb[2]}$ and hence
\begin{align*}
	\Abs{\qa[3]} = \Abs{\qq[1][-1](\qa[1]\times\qa[2])\qq[1]} = \Abs{\qa[1]\times\qa[2]} = \Abs{\qb[1]\times\qb[2]} = \Abs{\qb[3]}
\end{align*}
such that $\qa[3] \sim \qb[3]$. Moreover, by Lemma~\ref{lemma:AC_EQUAL_MINUS_CA} it holds $(\qa[1]\times\qa[2])\qa[1] = - \qa[1](\qa[1]\times\qa[2])$ and $\qb[3]\qb[1] = -\qb[1]\qb[3]$.  Then, by the left side equality in~\eqref{eq:SOLUTION_TWO_ALIGNMENTS} we have $\qb[1] = \qq[][-1]\qa[1]\qq$ and hence
\begin{align}
	\qa[3]\qb[1] = \qq[][-1](\qa[1]\times\qa[2])\qq\qq[][-1]\qa[1]\qq = - \qq[][-1]\qa[1]\qq\qq[][-1](\qa[1]\times\qa[2])\qq = -\qb[1]\qa[3].
	\label{eq:B3_ORTHOGONAL_B1}
\end{align}
Next, multiplying \eqref{eq:HOMOGENEOUS_SOLUTION_PROOF_PART_2} by $\qq[2]$ from the left yields $\qa[3]\qq[2] -\qq[2]\qb[3] = 0$ such that by Theorem~\ref{theorem:NONZERO_SOLUTIONS_OF_THE_HOMOGENEOUS_SYLVESTER_EQUATION} for arbitrary real numbers $\lambda_2, \mu_2$ with $\Abs{\lambda_2} + \Abs{\mu_2\,(\qa[3] + \qb[3])} \neq 0$ we obtain
	\begin{align}
		\qq[2] = \lambda_2\sqrt{\qa[3]\qb[3][*]} + \mu_2(\qa[3] + \qb[3]),
		\label{eq:SOLUTION_HOMOGENEOUS_WAHBAS_EQUATION_Q2_PROOF}
	\end{align}
subject to $\qa[3]\cdot\imq[2] = 0$ if $\qa[3] =- \qb[3]$. Substituting ~\eqref{eq:SOLUTION_HOMOGENEOUS_WAHBAS_EQUATION_Q2_PROOF} into the right side equality in~\eqref{eq:HOMOGENEOUS_SOLUTION_PROOF_PART_1}, and with~\eqref{eq:B3_ORTHOGONAL_B1} as well as $\qb[3]\qb[1] = -\qb[1]\qb[3]$ we obtain
\begin{align*}
	0 = \qq[2]\qb[1] - \qb[1]\qq[2] = \mu_2(\qa[3]\qb[1] + \qb[3]\qb[1] - \qb[1]\qa[3]- \qb[1]\qb[3]) = 2\mu_2(\qa[3] + \qb[3])\qb[1]
\end{align*}
such that $\mu_2(\qa[3] + \qb[3]) = 0$, which completes the proof.
\end{proof}

\subsection{Minimal Analytical Rotation Algorithm (MARA)}
\label{sec:MARA}
According to Theorem~\ref{theorem:EXISTENCE_OF_SOLUTIONS_OF_WAHBAS_EQUATION}, a solution to \eqref{eq:WAHBAS_EQUATION} exists if and only if $(\qa[1], \qa[2]) \sim (\qb[1], \qb[2])$. A necessary condition for the pairwise similarity is the preservation of the scalar part of the product, $\Re(\qa[1]\qa[2]) = \Re(\qb[1]\qb[2])$. Given that the quaternions are pure, that is $\rea[\ell] = \reb[\ell] = 0$ for $\ell\in\{1,2\}$, this condition reduces to the equality of the inner products
\begin{align}
\ima[1] \cdot \ima[2] = \imb[1] \cdot \imb[2].
\end{align}
Geometrically, this implies that the angle between $\ima[1], \ima[2]$ must be identical to the angle between $\imb[1], \imb[2]$. This condition constitutes a fundamental geometric constraint on the existence of an alignment between two sets of observations. 

While Theorem~\ref{theorem:SOLUTION_TO_THE_HOMOGENEOUS_WAHBAS_EQUATION} provides the complete algebraic solution set of \eqref{eq:WAHBAS_EQUATION}, a practical implementation requires a single solution. By fixing the free parameters $\lambda_1, \lambda_2$, and $\mu_1$ in Theorem~\ref{theorem:SOLUTION_TO_THE_HOMOGENEOUS_WAHBAS_EQUATION} and by Lemma~\ref{lemma:QUATERNION_SQRT}, we obtain MARA, as detailed in Algorithm~\ref{alg:MARA}.

\begin{algorithm}[ht]
\caption{Minimal Analytical Rotation Algorithm (MARA) for $n=2$}
\label{alg:MARA}
\begin{algorithmic}[1]
\State \textbf{Input:} Pure quaternions $\{\qa[1],\qa[2]\},\{\qb[1],\qb[2]\}$ with $(\qa[1], \qa[2]) \sim (\qb[1], \qb[2])$
\State \textbf{Output:} Optimal quaternion $\qq$
\State $\qq[1] \gets \qa[1] + \qb[1]$
\State $\qp \gets \qq[1][*] (\qa[1] \times \qa[2]) \qq[1] (\qb[2] \times \qb[1])$
\State $\qq[2] \gets \qp + |\qp|$
\State $\qq \gets \qq[1] \qq[2]$
\end{algorithmic}
\end{algorithm}

\begin{remark}
\label{remark:MARA_NEGLECTED_CONDITIONS}
The structure of MARA is based on results of
Theorem~\ref{theorem:SOLUTION_TO_THE_HOMOGENEOUS_WAHBAS_EQUATION} and
Lemma~\ref{lemma:QUATERNION_SQRT}, although several considerations must be
taken into account. In line~3 we set $\lambda_1 = 0$ and $\mu_1 = 1$ to minimize computational cost. Although Theorem~\ref{theorem:SOLUTION_TO_THE_HOMOGENEOUS_WAHBAS_EQUATION} formally requires a test for the case $\ima[1] = -\imb[1]$, this condition defines an event of measure zero and thus occurs with probability zero. Consequently, we omit this test. In practical implementations, exact equality of floating-point variables is neither expected nor numerically advisable due to rounding and representation errors. An analogous argument applies to line~5, where the computation of $\qq[2]$ assumes that $\qp$ in line~4 is nonreal. If $\qp$ were real, the cross products $(\qa[1] \times \qa[2])$ and $(\qb[1] \times \qb[2])$ would be parallel, which again constitutes an event of measure zero. Thus,
$\qp$ is nonreal with probability~$1$ and the algorithmic steps remain valid in both theory and practice.
\end{remark}

\section{Performance Analysis and Numerical Validation}
\label{sec:performance}

This section evaluates the computational complexity and numerical stability of the proposed MARA. Performance is benchmarked against the Second-Order Estimator of the Optimal Quaternion (ESOQ2) \cite{mortari2000_second}, Markley's closed-form solution \cite{markley2002_fast}, and the Fast Linear Attitude Estimator (FLAE) \cite{WuZhou2018_FLAE}. A C++ implementation of these algorithms as well as the complete test environment is provided by an open-source repository in~\cite{MARA_Repository}.

\subsection{Computational Complexity}

Computational complexity is evaluated by counting floating-point operations (FLOPs) in a controlled C++ environment. Table~\ref{table:benchmarks} summarizes the explicit addition, multiplication, division, and transcendental square root counts for each algorithm for the case $n=2$.

\begin{table}[hbt!]
\centering
\caption{Computational Complexity Profiles for Two-Vector Deterministic Solvers}
\label{table:benchmarks}
\small 
\renewcommand{\arraystretch}{1.3}
\begin{tabularx}{\textwidth}{X c c c c}
\toprule
{Operation} & {ESOQ2} & {Markley} & {FLAE} & {MARA} \\ 
\midrule
Additions & 58  & 38  & 105 & 34 \\
Multiplications        & 70  & 66  & 134 & 50 \\
Divisions              & 0   & 11  & 6   & 0  \\
Square Roots           & 3   & 4   & 0   & 1  \\ 
\midrule
{Total FLOPs}          & {131} & {119} & {245} & {85} \\ 
\bottomrule
\end{tabularx}
\end{table}

As shown in Table~\ref{table:benchmarks}, MARA is the most efficient optimal solver, that requires a total of just 85 FLOPs. This represents a $35.11\%$ reduction in FLOPs relative to ESOQ2, a $28.57\%$ reduction relative to Markley's formulation, and a $65.31\%$ reduction relative to FLAE. Crucially, MARA reduces the number of required square roots down to just one, representing a $66.67\%$ reduction compared to ESOQ2 and a $75.00\%$ reduction compared to Markley's formulation.

\subsection{Experimental Setup}

To systematically evaluate the numerical stability of the proposed algorithms, we employ a dual-scenario framework. All computations are performed in IEEE 754 double-precision floating-point arithmetic to ensure numerical fidelity.

\subsubsection{Markley Two-Vector Cases}

First, the algorithms were evaluated using the two-vector observation subsets from Markley's original 12 cases~\cite{markley1993_attitude}. The reference vectors $\qa[1], \qa[2]$, corresponding standard deviations $\sigma_1$, $\sigma_2$, and specific ground-truth orientation unit quaternions $\qq[T]$ were preserved exactly as defined by the author for each independent scenario. The vectors $\qb[\ell]$, $\ell= {1,2}$ are calculated as $\qb[\ell] = \qq[T][*] \qa[\ell] \qq[T]$. For each trial, we generate $M=10000$ noisy realizations by adding independent Gaussian noise with a standard deviation of $\sigma_1$, $\sigma_2$, respectively, to each component of $\qb[\ell]$.

\subsubsection{Monte Carlo Simulations}

Second, a Monte Carlo simulation was conducted over $N = 1000$ trials. For each trial we generate a uniformly distributed ground truth unit quaternion $\qq[T]$ and two pure quaternions $\qa[1], \qa[2]$ independently and uniformly distributed on the unit sphere. For each trial, we calculate two pure unit quaternions $\qb[1], \qb[2]$ as $\qb[\ell] = \qq[T][*]\qa[\ell]\qq[T]$, with $\ell = 1,2$. For each trial, we generate $M=1000$ noisy realizations by adding independent Gaussian noise with a standard deviation of $\sigma = 0.001$ to each component of $\qb[\ell]$, followed by normalization of $\qb[\ell]$ to unit length. 
 
\subsection{Numerical Stability and Topological Vulnerabilities}

The numerical validation metrics are evaluated via the mean accumulated Wahba's cost function value and the absolute angular error $\Delta\theta$. The absolute angular error $\Delta\theta$ is derived from the error quaternion $\Delta\qq$. Following the standard convention in attitude determination \cite{markley+crassidis2014_fundamenta}, $\Delta\qq$ represents the rotation from the ground truth $\qq[T]$ to the estimate $\qq$ given by $\Delta\qq = \qq\qq[T][-1] = \qq\qq[T][*]$. Given the axis-angle relationship $\Re(\Delta\qq) = \cos(\Delta\theta/2)$, the absolute error is computed as $\Delta\theta = 2\arccos\left|\Re(\qq\qq[T][*])\right|$, where the absolute value operator is employed since $\qq$ and $-\qq$ represent the same rotation. Lower values of both metrics indicate a closer agreement of the estimated quaternion $\qq$ with the ground truth $\qq[T]$.

Table~\ref{tab:markley_suite} reports the mean Wahba's cost over $M=10000$ noisy realizations of each of the Markley two-vector cases and Table~\ref{tab:MONTE_CARLO_SUMMARY} summarizes $\Delta\theta$ statistics and mean Wahba's cost from the Monte Carlo simulations.

\begin{table}[hbt!]
\centering
\caption{Markley's Two-Vector Cases: Mean Wahba's Cost}
\label{tab:markley_suite}
\small
\renewcommand{\arraystretch}{1.3}
\begin{tabularx}{\textwidth}{c Y Y Y Y}
\toprule
{Case} & {ESOQ2} & {Markley} & {FLAE} & {Proposed MARA} \\ 
\midrule
2  & $1.015431 \times 10^{-12}$ & $2.574101 \times 10^{-13}$ & $7.640959 \times 10^{-13}$ & $5.148202 \times 10^{-13}$ \\
4  & $2.407053 \times 10^{-01}$ & $2.436861 \times 10^{-05}$ & $7.474508 \times 10^{-05}$ & $4.873550 \times 10^{-05}$ \\
5  & $8.623568 \times 10^{-02}$ & $1.230889 \times 10^{-05}$ & $3.770580 \times 10^{-05}$ & $2.461731 \times 10^{-05}$ \\
7  & $9.999574 \times 10^{-05}$ & $5.177546 \times 10^{-03}$ & $4.818586 \times 10^{-03}$ & $6.254665 \times 10^{-10}$ \\
9  & $2.003890 \times 10^{-04}$ & $1.165998 \times 10^{-02}$ & $2.365439 \times 10^{-03}$ & $5.126744 \times 10^{-05}$ \\
11 & $1.243991 \times 10^{-05}$ & $3.999078 \times 10^{-02}$ & $9.065254 \times 10^{-01}$ & $2.487934 \times 10^{-05}$ \\
12 & $1.281932 \times 10^{-05}$ & $3.408773 \times 10^{-01}$ & $9.044711 \times 10^{-01}$ & $2.563815 \times 10^{-05}$ \\
\bottomrule
\end{tabularx}
\end{table}

Table~\ref{tab:markley_suite} reveals distinct algorithmic failure modes. In Case~2 all four algorithms attain costs of order $10^{-12}$, confirming numerical equivalence under well-conditioned geometry. Performance diverges markedly in ill-conditioned configurations. ESOQ2 records costs of $2.407 \times 10^{-1}$ and $8.624 \times 10^{-2}$ in Cases~4 and~5, respectively, attributable to sensitivity to near-collinear measurement vectors. In Cases~11 and~12, Markley's method yields costs of $3.999 \times 10^{-2}$ and $3.409 \times 10^{-1}$, while FLAE diverges catastrophically, with costs of $9.065 \times 10^{-1}$ and $9.045 \times 10^{-1}$ exceeding $0.90$. MARA does not exceed $5.127 \times 10^{-5}$ in any case, demonstrating uniform numerical stability across all seven configurations.

\begin{table}[hbt!]
\centering
\caption{Random Monte Carlo Simulations Statistical Summary}
\label{tab:MONTE_CARLO_SUMMARY}
\small
\renewcommand{\arraystretch}{1.3}
\begin{tabularx}{\textwidth}{l Y Y Y Y}
\toprule
& \multicolumn{3}{c}{{Absolute Angular Error $\Delta\theta$ (deg.)}} & {Wahba's Cost Function} \\ 
\cmidrule(lr){2-4} \cmidrule(lr){5-5}
{Algorithm} & {Mean} & {Std. Dev.} & {Max.} & {Mean} \\ 
\midrule
ESOQ2   & $0.291174$  & $5.569008$  & $176.094672$ & $1.565707 \times 10^{-03}$ \\
Markley & $55.035047$ & $38.137502$ & $179.999754$ & $3.828801 \times 10^{-01}$ \\
FLAE    & $100.998240$ & $47.602441$ & $179.999953$ & $6.033440 \times 10^{-01}$ \\
MARA    & $0.121737$  & $0.113256$  & $4.726589$   & $5.010550 \times 10^{-07}$ \\
\bottomrule
\end{tabularx}
\end{table}

Table~\ref{tab:MONTE_CARLO_SUMMARY} summarizes the results over random geometries. MARA attains a mean $\Delta\theta=0.121737 \text{ deg.}$, standard deviation $0.113256\text{ deg.}$, maximum $4.726589\text{ deg.}$, and mean Wahba cost $5.010550 \times 10^{-7}$. In addition, by choosing $\lambda_1 = 1$, $\mu_1 = 0$ for the calculation of $q_1$, that is using only the quaternion square root, yields identical numerical results, confirming the invariance of the solution in Theorem~\ref{theorem:SOLUTION_TO_THE_HOMOGENEOUS_WAHBAS_EQUATION} to the choice of the parameters $\lambda_1$ and $\mu_1$. ESOQ2 yields a comparable mean of $0.291174\text{ deg.}$, standard deviation $5.569008\text{ deg.}$, maximum $176.094672\text{ deg.}$, consistent with sporadic failure under degenerate measurement geometries. Markley's method and FLAE are globally unreliable, with mean errors of $55.035047\text{ deg.}$ and $100.998240\text{ deg.}$, and maxima approaching $180\text{ deg.}$, and mean Wahba costs of $3.828801 \times 10^{-1}$ and $6.033440 \times 10^{-1}$, respectively. Among the algorithms evaluated, MARA achieves the lowest mean of absolute angular error and Wahba's cost function.

The empirical distributions of absolute angular errors are visualized via boxplots in Figure~\ref{fig:boxplot_abs}. Each boxplot summarizes the error distribution: the box denotes the interquartile range (IQR), the red line indicates the median, and whiskers extend to $1.5 \times \text{IQR}$. Outliers are represented as individual points to illustrate the behaviour of the algorithms under extreme geometric configurations.
\begin{figure}[!t]
     \centering
         \includegraphics[width=0.5\linewidth]{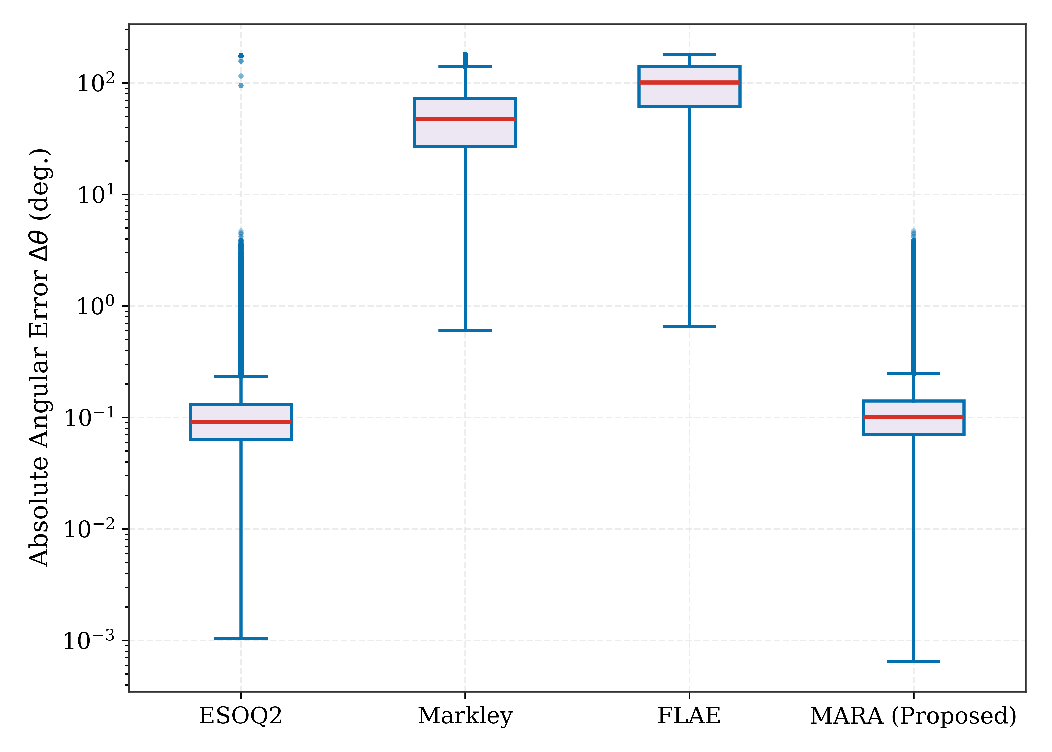}
         \if\ArxivFormat1
         	\caption{Absolute angular error $\Delta\theta$.}
         \else
         	\caption{Absolute angular error $\mathbf{\Delta\boldsymbol\theta}$.}
         \fi         
         \label{fig:boxplot_abs}
\end{figure}

Figure~\ref{fig:boxplot_abs} displays the log-scale distribution of absolute angular error $\Delta\theta$ for all four algorithms, in agreement with the Monte Carlo statistics of Table~\ref{tab:MONTE_CARLO_SUMMARY}. MARA exhibits the most compact distribution, with a median near $10^{-1}\text{ deg.}$, a narrow interquartile range, standard deviation $0.113256\text{ deg.}$, and maximum $4.726589\text{ deg.}$ ESOQ2 attains a similar median but displays a substantially heavier tail, with standard deviation $5.569008\text{ deg.}$ and outliers extending to $176.094672\text{ deg.}$, reflecting sporadic catastrophic failures. Markley's method and FLAE both show severely degraded distributions, with mean errors of the tested scenarios.

\section{Conclusion}\label{sec:CONCLUSION}
We have derived the necessary and sufficient conditions for the existence of a zero-cost solution to Wahba’s problem for the case of two vector observations. By establishing a direct algebraic connection between Wahba’s cost function and the homogeneous singular Sylvester equation, we provided a complete, closed-form parametrization of the solution set within the quaternion algebra. This approach avoids traditional matrix-based reformulations, offering a more transparent geometric interpretation of the problem's underlying structure. Based on this theoretical framework, we formulated the Minimal Analytical Rotation Algorithm (MARA) as an efficient, non-iterative solver for optimal attitude determination. Extensive Monte Carlo benchmarking, involving $10^6$ trials across a uniform distribution of geometric configurations, demonstrates that MARA achieves higher numerical stability than the established solvers such as ESOQ2. Crucially, the algorithm provides a significant reduction in computational overhead, specifically a $35.11\%$ reduction in total FLOPs, while maintaining a fixed execution time.

\section*{Acknowledgement}
The authors would like to thank the Federal Ministry of Research, Technology, and Space (BMFTR) for its support as part of the research program Communication Systems “Souverän. Digital. Vernetzt.”. Joint project 6G-life, project identification number: 16KIS2413K and the German Research Foundation (DFG, Deutsche Forschungsgemeinschaft) as part of Germany’s Excellence Strategy – EXC 2050/2 – Project ID 390696704 – Cluster of Excellence “Centre for Tactile Internet with Human-in-the-Loop” (CeTI) of TUD Dresden University of Technology.

\if\ArxivFormat1
  \bibliographystyle{unsrt}
  \bibliography{paper.bib}
\else
  \bibliography{paper.bib}
\fi

\end{document}